\newtheorem{thm}{Th\'eor\`eme}
\newtheorem{prop}[thm]{Th\'eor\`eme}
\newtheorem{lem}[thm]{Lemme}
\newtheorem*{rmq}{Remarque}
\DeclareFontFamily{U}{txsyc}{}
\DeclareFontShape{U}{txsyc}{m}{n}{
   <-> txsyc%
}{}
\DeclareFontShape{U}{txsyc}{bx}{n}{
   <-> txbsyc%
}{}
\DeclareFontShape{U}{txsyc}{l}{n}{<->ssub * txsyc/m/n}{}
\DeclareFontShape{U}{txsyc}{b}{n}{<->ssub * txsyc/bx/n}{}
\DeclareSymbolFont{symbolsC}{U}{txsyc}{m}{n}
\DeclareMathSymbol{\df}{\mathrel}{symbolsC}{"42}
\DeclareMathSymbol{\fd}{\mathrel}{symbolsC}{"43}
\DeclareMathSymbol{\lJoin}{\mathrel}{symbolsC}{"58}
\DeclareMathSymbol{\rJoin}{\mathrel}{symbolsC}{"59}
\newcommand{\cL}{{\cal L}}
\newcommand{\LL}{\mathbb{L}}
\newcommand{\NN}{\mathbb{N}}
\newcommand{\RR}{\mathbb{R}}
\newcommand{\TT}{\mathbb{T}}
\newcommand{\ZZ}{\mathbb{Z}}
\newcommand{\iy}{\infty}
\newcommand{\lt}{\left}
\newcommand{\na}{\nabla}
\newcommand{\ri}{\rightarrow}
\newcommand{\rt}{\right}
\newcommand{\sm}{\smallskip}
\newcommand{\fo}{\forall\ }
\newcommand{\lVe}{\lt\Vert}
\newcommand{\rVe}{\rt\Vert}
\newcommand{\un}{\mathds{1}}
\newcommand{\Vect}{\mathrm{Vect}}
\newcommand{\bq}{\begin{eqnarray*}}
\newcommand{\bqn}[1]{\begin{eqnarray}\label{#1}}
\newcommand{\eq}{\end{eqnarray*}}
\newcommand{\eqn}{\end{eqnarray}}
\newcommand{\lin}{\llbracket}
\newcommand{\rin}{\rrbracket}
\newcommand{\ttsim}{\raise.17ex\hbox{$\scriptstyle\mathtt{\sim}$}}
\title{Étude spectrale minutieuse de processus \\ moins indécis que les autres}
\author{Laurent Miclo, Pierre Monmarch\'e}
\begin{document}

\maketitle

\begin{abstract}
On cherche ici à quantifier la convergence à l'équilibre de processus de Markov non réversibles, en particulier en temps court. La simplicité des modèles considérés nous permet de donner une expression assez explicite de l'évolution temporelle de l'erreur $L^2$ en norme opérateur  et de la comparer avec celle des cas réversibles correspondants.
\end{abstract}

\tableofcontents

\newpage
\section{Introduction : un processus de volte-face}

Le recours à la réversibilité peut parfois limiter les performances des algorithmes stochastiques
(voir par exemple \cite{nonreversible,Neal_better,diaconis}), ce qui nous motive à mieux comprendre la convergence vers l'équilibre des processus non-réversibles.
Dans ce papier nous étudierons en détail un modèle, pour lequel on verra comment se quantifie 
le fait que les processus non-réversibles ont d'abord tendance à 
aller moins vite à l'équilibre que leur équivalent réversibles, avant 
d'atteindre des taux asymptotiques de convergence bien meilleurs.
On retrouvera notamment pour une chaîne de Markov en temps discret 
et à espace d'état fini (étudiée dans \cite{diaconis} d'un point de vue asymptotique) les phénomènes d'amorce lente de convergence  mis
en évidence dans \cite{gadat}, dans un contexte continu  d'équations d'évolutions cinétiques simples.
\par\medskip
Plus précisément, soit $(P_t)_{t\geq 0}$ un semi-groupe markovien admettant une probabilité invariante $\mu$.
Sous des conditions d'ergodicité, $P_t$  converge, en divers sens, vers $\mu$ pour de grands temps $t\geq 0$.
Considérons la convergence forte dans $L^2(\mu)$ : en interprétant $\mu$ 
 comme l'opérateur $f\mapsto (\int f d\mu) \un$, on s'intéresse à  la norme opérateur $
\lVe P_t-\mu\rVe$ dans $L^2(\mu)$.\\
Sous hypothèse de  réversibilité, le générateur $\cL$ du semi-groupe se diagonalise dans une base orthonormée (ou plus généralement, relativement à une résolution de l'identité formée d'une famille monotone de projections), ce qui permet de
 voir que 
 \bq
 \fo t\geq 0,\qquad \lVe P_t-\mu\rVe&=&\exp(-\lambda t)\eq
 où
 $-\lambda \leq 0$ est la borne supérieure du spectre de $\cL_{\vert\un^\bot}$, la restriction de $\cL$ 
 à l'espace orthogonal aux fonctions constantes dans $L^2(\mu)$
 (s'il est non nul, $\lambda$ est appelé le trou spectral de $\cL$).\\
 Dans les cas non-réversibles, il peut en être autrement, même si 
 la fonction $\RR_+\ni t\mapsto \lVe P_t-\mu\rVe$ est toujours décroissante (il s'agit d'une conséquence
 de l'inégalité de Jensen).
 Ainsi dans \cite{gadat}, pour la diffusion  constituée du couple d'un processus  d'Ornstein-Uhlenbeck linéaire et de son intégrale sur le cercle, la décroissance de $\ln( \lVe P_t-\mu\rVe)$ pour $t\geq 0$ petit
 commence par être d'ordre $t^3$.\par
 
Pour mieux appréhender ce phénomène, on va s'intéresser ici à un modèle très simple, analogue en temps continu de la marche persistante d'ordre 2 de \cite{diaconis} : une particule se déplaçant à vitesse constante sur un cercle et faisant brusquement volte-face à taux constant. Autrement dit, on considère $(Y_t)_{t\geq 0}$ un processus sur $\{-1,1\}$ qui change de signe avec un taux exponentiel $a>0$, et on pose pour tout $t\geq 0$, $X_t \df \int_0^t Y(s)ds$ sur $\mathbb{T} = \mathbb{R}/2\pi\mathbb{Z}$, de sorte que $(X_t,Y_t)$ représente le couple position-vitesse de la particule au temps $t\geq 0$. Ce modèle est cité comme exemple simple d'hypocoercivité dans \cite{jouet}. Le processus $(X_t,Y_t)_{t\geq 0}$ est caractérisé par son générateur infinitésimal, qui agit
sur des fonctions tests convenables $f$ par
\[\fo (x,y)\in\mathbb{T} \times\{-1,1\},\qquad \mathcal L_a f(x,y)\df y\partial_xf(x,y)+a\lt(f(x,-y)-f(x,y)\rt)\]
ou par le semi-groupe $(P_t^a)_{t\geq 0}$ qu'il engendre sur $L^2(\mu)$ : pour tout $f\in L^2(\mu)$,
\bq
\fo t\geq 0,\, \fo (x,y)\in\mathbb{T} \times\{-1,1\},\quad P^a_tf(x,y)\df \mathbb E \lt(f(X_t,Y_t)|X_0=x,Y_0=y\rt).\eq

La mesure invariante $\mu$ correspondante est la loi uniforme sur $\mathbb{T}\times\{-1,1\}$. 
Il est  connu que $P_t$ (pour alléger les notations,
 le paramètre  $a>0$ sera souvent sous-entendu) converge fortement dans $L^2(\mu)$
vers $\mu$ et que la vitesse finit par être exponentielle (voir la section 1.4 de \cite{jouet}, bien que le taux optimal n'y soit pas obtenu).
Comme ce serait le cas pour des opérateurs de dimension finie, on suspecte que
\bqn{lambda}\underset{t\rightarrow+\infty}{\lim}\frac1t \log\|P_t-\mu\| &=& - \lambda\eqn
avec
\bqn{lambda2}\lambda&\df& \inf\{-\mathfrak R(\theta),\ \theta\text{ valeur propre de }\mathcal L_{\vert \un^\bot}\}\eqn
On va vérifier que ceci est juste, 
mais on cherche surtout des résultats plus quantitatifs, en estimant précisément la norme $\|P_t-\mu\|$
en tout temps $t\geq 0$, car en pratique des renseignements
 asymptotiques tels que (\ref{lambda}) ne sont pas très exploitables.
Voilà l'essentiel des résultats obtenus (illustrés par les figures \ref{image1}, \ref{image2} et \ref{image3}) sur ce modèle :

\rule{\linewidth}{.5pt}

\begin{prop}\label{essentiel}
Pour $a \geq 1$, on a $\ \lambda = a - \sqrt{a^2-1}$ et pour $a\leq 1,\ \lambda = a$. Plus précisément, pour tout $t>0$,

\begin{itemize}
\item Si $a>1$ alors, en notant $\omega = \sqrt{a^2 - 1}$ et $\gamma = e^{-2\omega t}$,
\begin{eqnarray*}
 \|P_t-\mu\| & = & e^{(-a+\sqrt{a^2-1})t}\sqrt{1 + \frac{2}{\omega^2\left(\frac{1+\gamma}{1-\gamma}\right) + a\sqrt{1+\omega^2\left(\frac{1+\gamma}{1-\gamma}\right)^2} -1}}\\
\\
& = & 1 - \frac{t^3}{3} + \underset{t\rightarrow 0}{o}(t^3)\\
\\
& \underset{t\rightarrow +\infty}{\sim} & \frac{a^2}{a^2-1}e^{\lambda t}
\end{eqnarray*}

\item Si $a=1$ alors
\begin{eqnarray*}
 \|P_t-\mu\| & = & e^{-t}\sqrt{1 + \frac{2}{\sqrt{1+\frac{1}{t^2}}-1}}\\
\\
& = & 1 - \frac{t^3}{3} + \underset{t\rightarrow 0}{o}(t^3)\\
\\
& \underset{t\rightarrow +\infty}{\sim} & 2te^{-t}
\end{eqnarray*}
\item Si $a<1$ alors
\begin{eqnarray*}
 \|P_t-\mu\| & = & e^{-at}\sqrt{g(t)}\\
\\
& = & 1 - \frac{at^3}{3} + \underset{t\rightarrow 0}{o}(t^3)\\
\end{eqnarray*}
avec $g$ telle que
\begin{eqnarray*}
\underset{t\ri+\iy}{\limsup}\  g(t) & = & \frac{1+a}{1-a}\\
\\
\underset{t\ri+\iy}{\liminf}\  g(t) & = & 1
\end{eqnarray*}
et, en notant $\nu = 2\sqrt{1-a^2}$, si $t \in \left[0, \frac{\pi}{\nu}\right]$ alors

\begin{eqnarray*}
 g(t) & = & \left(1 + \frac{2}{\sqrt{\frac{\nu^2}{a^2}\frac{1}{2(1-\cos(\nu t)}+1} -1}\right)\\
\end{eqnarray*}
\end{itemize}
\end{prop}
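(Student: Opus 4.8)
The plan is to exploit the Fourier decomposition of $L^2(\mu)$ with respect to the position variable $x\in\mathbb{T}$, which block-diagonalizes $\mathcal{L}_a$ into countably many $2\times 2$ matrices. Writing $f(x,y)=\sum_{n\in\mathbb{Z}} e^{inx} g_n(y)$ with $g_n:\{-1,1\}\to\mathbb{C}$, the generator acts on the $n$-th Fourier mode by the matrix (in the basis $(g_n(1),g_n(-1))$)
\[
M_n \;=\; \begin{pmatrix} in - a & a \\ a & -in - a \end{pmatrix}.
\]
The constant functions correspond to $n=0$ and the symmetric eigenvector, so $\mathcal{L}_{a\,|\,\un^\bot}$ is the direct sum of all $M_n$ for $n\neq 0$ together with the antisymmetric part of $M_0$ (which is just $-2a$). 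One then computes $P_t-\mu$ mode by mode: its operator norm on $L^2(\mu)$ is the supremum over $n$ of $\|e^{tM_n}\|$ (and $e^{-2at}$ from the $n=0$ antisymmetric mode), where $\|\cdot\|$ is the spectral norm of the $2\times 2$ matrix. Since the model is symmetric under $n\mapsto -n$ (complex conjugation), it suffices to treat $n\geq 1$.

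\textbf{Computing $\|e^{tM_n}\|$.} The eigenvalues of $M_n$ are $-a\pm\sqrt{a^2-n^2}$. For a given $n$ I would diagonalize (or use the Cayley–Hamilton / explicit $2\times 2$ exponential formula) to write $e^{tM_n}$ explicitly, then compute $\|e^{tM_n}\|^2$ as the largest eigenvalue of $e^{tM_n^*}e^{tM_n}$, i.e. the largest root of a quadratic whose coefficients are $\trace(e^{tM_n^*}e^{tM_n})$ and $\lve\dete(e^{tM_n})\rve^2=e^{-4at}$. This gives a closed form
\[
\|e^{tM_n}\|^2 \;=\; e^{-2at}\Bigl(u_n(t) + \sqrt{u_n(t)^2-1}\Bigr),
\]
where $2u_n(t)=e^{2at}\trace(e^{tM_n^*}e^{tM_n})$ is an explicit combination of hyperbolic or trigonometric functions of $t\sqrt{\lve a^2-n^2\rve}$ depending on the sign of $a^2-n^2$. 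The case distinctions in the Proposition ($a>1$, $a=1$, $a<1$) come precisely from whether $n=1$ gives a real pair ($a>1$), a Jordan block ($a=1$), or a genuinely oscillatory pair ($a<1$).

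\textbf{The main obstacle} is showing that the supremum over $n\geq 1$ is attained at $n=1$ for every $t>0$, so that $\|P_t-\mu\|=\max(e^{-at},\,\|e^{tM_1}\|)=\|e^{tM_1}\|$ (the latter since $\|e^{tM_1}\|\geq e^{-at}$ by the determinant bound and a trace estimate). One expects monotonicity: the larger $n$, the more oscillatory and contractive the mode. Concretely I would show $u_n(t)$ is decreasing in $n$ for $n\geq 1$ (for $n\geq 2$ one has $\trace(e^{tM_n^*}e^{tM_n}) = e^{-2at}(e^{t\sqrt{n^2-a^2}\,\cdot}+\cdots)$ with purely oscillatory corrections bounded by the $n=1$ contribution, using $e^{-2at}\cosh(\cdot)\geq$ the oscillatory analogues), hence $\|e^{tM_n}\|\leq\|e^{tM_1}\|$. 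This reduces everything to a single scalar function of $t$, after which the three displayed formulas for $\|P_t-\mu\|$, their small-$t$ expansions $1-\tfrac{\cdot\, t^3}{3}+o(t^3)$ (obtained by Taylor-expanding $u_1(t)$ near $t=0$, where $u_1(t)=1+\tfrac{t^4}{?}+\dots$ so that $\|e^{tM_1}\|^2\approx 1-\tfrac{2}{3}t^3$), and the large-$t$ asymptotics (dominated by the $\gamma\to 0$ limit, resp. by the $t^2$ growth when $a=1$, resp. by the $\liminf/\limsup$ of $g$ coming from $\cos(\nu t)$ oscillating in $[-1,1]$) follow by elementary asymptotic analysis of the explicit expression.
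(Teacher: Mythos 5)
Your decomposition into the $2\times 2$ Fourier blocks is exactly the paper's starting point, and computing each block norm through the singular values of $e^{tM_n}$ (trace and determinant of $e^{tM_n^*}e^{tM_n}$) is a perfectly viable alternative to the paper's route, which instead optimizes the ratio $\|e^{tK_n}u\|/\|u\|$ over the non-orthogonal eigenbasis. The gap is in what you yourself call the main obstacle. First, the claim that $\|e^{tM_n}\|$ is decreasing in $n$ for every $t$ is false in the oscillatory regime $n>a$: there $\|e^{tM_n}\|^2=e^{-2at}g_n(t)$ with $g_n$ periodic of period $2\pi/\nu_n$, $\nu_n=2\sqrt{n^2-a^2}$, and since these periods are incommensurable there are arbitrarily large times at which $g_n(t)>g_m(t)$ for $n>m$ (for instance $g_m$ near its minimum $1$ while $g_n$ is near its maximum $\frac{n+a}{n-a}>1$). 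The paper only proves $g_m\ge g_n$ on the initial window $\left[0,\pi/\nu_m\right]$, then obtains $R(t,a,n)\le R(t,a,a)$ for all $t$ by letting non-integer indices $k$ decrease to $a$ (so that $\pi/\nu_k\to\infty$); and in the regime $n<a$ the monotonicity of $n\mapsto R(t,a,n)$ is itself delicate (lemme \ref{decroi}, established with computer-algebra assistance). Your trace-estimate sketch does not survive these obstructions as stated, even though the $a\ge1$ conclusion ($\|P_t-\mu\|^2=R(t,a,1)$ for all $t$) is ultimately true.

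Second, and more seriously, for $a<1$ the statement does not assert $\|P_t-\mu\|=e^{-at}\sqrt{g_1(t)}$ for all $t$ (that is false for large $t$); it only identifies $g=\sup_n g_n$ with $g_1$ on $\left[0,\pi/\nu_1\right]$ and then describes its $\limsup$ and $\liminf$. The $\limsup=\frac{1+a}{1-a}$ does follow from the periodicity of $g_1$ and the uniform bound $g_n\le\frac{n+a}{n-a}$, but the $\liminf=1$ requires exhibiting arbitrarily large times at which \emph{all} of the infinitely many, incommensurably periodic $g_n$ are simultaneously close to $1$; this is a simultaneous Diophantine approximation statement, proved in the paper via Minkowski's lattice-point theorem (lemme \ref{jerem}) combined with the tail bound $g_n\le1+\varepsilon$ for $n$ large. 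Attributing the $\liminf/\limsup$ to ``$\cos(\nu t)$ oscillating in $[-1,1]$'' misses this ingredient entirely. A minor further point: your guessed expansion $u_1(t)=1+O(t^4)$ cannot be right, since it would yield $\|P_t-\mu\|=1-2at+O(t^2)$; one needs $u_1(t)=1+2a^2t^2+\cdots$ with cancellations through order $t^3$ to recover $1-\frac{t^3}{3}+o(t^3)$.
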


\rule{\linewidth}{.5pt}


\bigskip

Comme dans \cite{gadat}, on observe une
 décroissance initiale en $t^3$.
Dans ce contexte non-réversible, la norme opérateur $\lVe P_t-\mu\rVe$
 se comporte donc différemment du rayon spectral de $P_t-\mu$,
 qui n'est autre que $\exp(-\lambda t)$, avec $\lambda$ défini en (\ref{lambda2}).
Comme nous l'a fait remarquer le referee, 
 ceci traduit aussi 
 l'aspect anormal des opérateurs $P_t$, pour $t>0$. Par ailleurs,
  le choix optimal de $a$ (au sens du meilleur taux asymptotique de convergence exponentielle) correspond à $a=1$ et voit le facteur pré-exponentiel exploser linéairement en temps grand.

\bigskip

Le processus $(X_t,Y_t)_{t\geq 0}$ précédent est un exemple 
de processus de Markov déterministe par morceaux, famille de plus en plus étudiée dans la littérature,
notamment pour ce qui concerne les processus de type TCP (voir par exemple les articles  \cite{MR2653264,TCP}
et les références qu'ils contiennent).
Actuellement les méthodes de couplage semblent les plus efficaces pour étudier leur convergence,
au sens de la distance de Wasserstein ou de la variation totale.
Pourtant nous nous demandons si l'un au moins de ces processus, la version du TCP à taux de saut constant, 
ne pourrait pas être étudié par le biais d'une variante de l'approche spectrale que nous allons suivre dans ce papier.
En effet, il s'agit du processus sur $\RR_+$ dont le générateur $\cL$ 
agit sur des fonctions tests $f$ par
\bq\fo x\in\RR_+,\qquad\mathcal Lf(x) &\df& f'(x) + l (f(r x)-f(x)),\eq
où $l>0$ et $r\in(0,1)$ sont des constantes.
Même si la probabilité invariante associée $\mu$ est difficile à décrire explicitement,
ses moments se calculent immédiatement (en faisant agir $\cL$ sur les monômes).
La diagonalisation de $\cL$ est  facile à obtenir, car les vecteurs propres
sont des polynômes. On en déduit également une formule pour leurs produits scalaires.
On dispose donc de toute l'information spectrale
nécessaire théoriquement pour calculer les normes opérateurs.
Malheureusement nous n'avons toujours pas réussi à mener à bien les calculs.
Une autre caractéristique spectrale curieuse de $\cL$ est que bien que son spectre
soit formé de valeurs propres de multiplicité 1 et bornées par $l$, $\cL$ n'est pas borné en tant qu'opérateur dans $L^2(\mu)$, du fait de sa composante différentielle.

\bigskip

Le théorème \ref{essentiel} sera démontré au cours de la partie \ref{calcul}. La partie \ref{transition} s'attache au lien entre le modèle discret de la marche persistante et son analogue continu du volte-face. Lorsque la fréquence de changement de vitesse devient grande ce processus continu tend vers le mouvement brownien, ce qui est étudié en partie~\ref{brownien}. La partie \ref{generalisation} quant à elle discute des généralisations de ces premiers résultats à des potentiels quelconques et à la dimension supérieure. Enfin, l'appendice regroupe quelques lemmes techniques utilisés dans le reste du texte.

\section{Calcul exact de la norme}\label{calcul}

Remarquons une fois encore que si le processus était réversible, le travail serait simple puisque $\mathcal L_a$ serait diagonalisable en base orthonormée (dans $L^2(\mu)$). Ce n'est pas le cas ici mais on va tout de même pouvoir décomposer l'espace en plans stables orthogonaux ce qui nous ramènera à calculer des normes d'opérateurs en dimension 2, qu'il faudra ensuite comparer entre elles.

\begin{lem}\label{plans}
 Les plans $V_n = \{f:(x,y)\mapsto e^{inx}g(y),\ g\in\mathbb{C}^{\{-1,1\}}\}$, pour $n\in\mathbb Z$, sont invariants par $\mathcal L_a$, orthogonaux et totaux dans $L^2(\mu)$. L'action de $P_t^a$ sur $V_n$ est donnée par $e^{tK_n^{(a)}}$, où pour toute fonction test $g$, 
\bq \fo y\in\{\pm1\},\qquad K_n^{(a)} g(y)&\df& inyg(y)+a(g(-y)-g(y))\eq
(à l'instar du générateur et du semi-groupe, le paramètre $a$ sera généralement omis par la suite).
\end{lem}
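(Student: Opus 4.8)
First I would check that each $V_n$ is invariant under $\mathcal L_a$ by direct computation: applying $\mathcal L_a$ to $f(x,y) = e^{inx}g(y)$ gives $y\,\partial_x(e^{inx}g(y)) + a(e^{inx}g(-y) - e^{inx}g(y)) = e^{inx}\bigl(inyg(y) + a(g(-y)-g(y))\bigr)$, which is again in $V_n$ with $g$ replaced by $K_n^{(a)}g$. Since $V_n$ is two-dimensional (spanned by $(x,y)\mapsto e^{inx}\un_{\{y=1\}}$ and $(x,y)\mapsto e^{inx}\un_{\{y=-1\}}$) and carries the finite-dimensional linear operator $K_n^{(a)}$, the restriction of the semigroup $P_t^a$ to $V_n$ is the matrix exponential $e^{tK_n^{(a)}}$; this follows because both $(P_t^a|_{V_n})_{t\geq0}$ and $(e^{tK_n^{(a)}})_{t\geq0}$ are continuous semigroups on the same finite-dimensional space with the same generator, namely $\mathcal L_a|_{V_n} = K_n^{(a)}$ (identifying $V_n$ with $\CC^{\{-1,1\}}$ via $e^{inx}g(y)\leftrightarrow g$).

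Next, orthogonality: for $f_n \in V_n$ and $f_m \in V_m$ with $n\neq m$, writing $f_n(x,y) = e^{inx}g(y)$ and $f_m(x,y) = e^{imx}h(y)$, the inner product in $L^2(\mu)$ factors as $\frac{1}{2}\sum_{y=\pm1}\overline{g(y)}h(y)\cdot\frac{1}{2\pi}\int_{\TT}e^{i(m-n)x}\,dx = 0$, using that $\mu$ is the product of the uniform measures on $\TT$ and on $\{-1,1\}$ and the orthogonality of the characters $e^{inx}$ on $\TT$.

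For totality, I would argue that the linear span of $\bigcup_{n\in\ZZ}V_n$ contains all functions of the form $e^{inx}\un_{\{y=\pm1\}}$, hence all products $p(x)q(y)$ with $p$ a trigonometric polynomial and $q$ arbitrary on $\{-1,1\}$; by density of trigonometric polynomials in $L^2(\TT)$ (Stone--Weierstrass or Fourier theory) and the fact that $L^2(\mu) \cong L^2(\TT)\otimes \CC^{\{-1,1\}}$, this span is dense in $L^2(\mu)$. None of these steps presents a real obstacle—the lemma is essentially a Fourier decomposition in the $x$ variable—so the only point requiring a little care is the passage from "same generator on a finite-dimensional invariant subspace" to "$P_t^a$ acts as $e^{tK_n^{(a)}}$", which I would justify by noting that $V_n$ consists of smooth (entire in $x$) functions lying in the domain of $\mathcal L_a$, so that $t\mapsto P_t^a f$ solves the Cauchy problem $\partial_t u = \mathcal L_a u$ within $V_n$, and this ODE has the unique solution $e^{tK_n^{(a)}}$ applied to the initial datum.
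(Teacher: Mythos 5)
Votre démonstration est correcte et suit essentiellement la même démarche que celle du papier : vérification directe que $\mathcal L_a(e^{inx}g) = e^{inx}K_n^{(a)}g$, puis orthogonalité et totalité déduites de celles des caractères $x\mapsto e^{inx}$ dans $L^2(\TT)$. Vous justifiez en outre avec un peu plus de soin le passage du générateur au semi-groupe sur chaque plan invariant de dimension finie (unicité de la solution du problème de Cauchy), point que le papier laisse implicite.
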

\begin{proof}
 L'orthogonalité et le caractère total découlent directement de ceux de $(x \mapsto e^{inx})_{n\in\mathbb N}$ dans $L^2(\mathbb T)$. On s'assure ensuite directement que pour $f(x,y) = e^{inx}g(y)$ on a bel et bien $\mathcal L f(x,y) = e^{inx} K_ng(y)$.
\end{proof}

On est donc ramené à calculer la norme d'une matrice $2\times2$. Notons

\[R(t,a,n) \overset{def}{=} \|P_t^a - \mu\|^2_{V_n}.\]
Notons que pour tout $n\neq0$ on a $V_n \subset Ker(\mu)$. Le cas $n=0$ est un peu à part et facile à régler : $K_0$ est diagonalisable avec deux valeurs propres, 0 (associées aux constantes, que l'on retranche ici) et $-2a$. Ainsi

\[R(t,a,0) = e^{-4at}.\]
Cette restriction ne réalisera en fait jamais la norme globale (sauf $t=0$ bien sûr) : en effet on va voir que, quelque soit $a$, $\mathcal{L}$ possède des valeurs propres de parties réelles $-a$ ; ainsi sur une droite propre pour une telle valeur propre $||P_t|| = e^{-at} > e^{-2at}$. D'autre part $K_n = \bar{K}_{-n}$ et on se restreindra donc dans la suite à $n> 0$. Finalement,

\[\|P_t-\mu\| = \underset{n\geq 1}{\sup}\lt(\|P_t\|_{V_n}\rt) = \underset{n\geq 1}{\sup}\lt(\sqrt{ R(t,a,n)}\rt)\]

\subsection*{Calcul des normes des restrictions}

\begin{lem}
 Si $a>n$ alors pour tout $t>0$
\begin{eqnarray*}
R(t,a,n) & = & e^{-2(a-\sqrt{a^2 - n^2})t} \times \left( 1 + \frac{2}{\omega^2 \left(\frac{1+\gamma}{1-\gamma}\right) + \frac{a}{n}\sqrt{1+\omega^2\left(\frac{1+\gamma}{1-\gamma}\right)^2} -1}\right)
\end{eqnarray*}
avec $\omega = \sqrt{\left(\frac{a}{n}\right)^2 - 1}$ et $\gamma = e^{-2\sqrt{a^2 - n^2} t}$.
\end{lem}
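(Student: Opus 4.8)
Le point de départ est le Lemme \ref{plans} : pour $n>0$ fixé, il s'agit de calculer $R(t,a,n)=\|e^{tK_n}-\pi\|^2$ où $K_n$ agit sur $\CC^{\{-1,1\}}\simeq\CC^2$ et $\pi$ est ici la projection nulle (puisque $V_n\subset\mathrm{Ker}(\mu)$ pour $n\neq0$), autrement dit $R(t,a,n)=\|e^{tK_n}\|^2$, la norme étant la norme d'opérateur sur $\CC^2$ muni du produit scalaire hermitien standard (restriction de $L^2(\mu)$). Dans la base $(\delta_1,\delta_{-1})$, on a
\[K_n=\begin{pmatrix} in-a & a\\ a & -in-a\end{pmatrix}.\]
Je commencerais par diagonaliser cette matrice : sa trace est $-2a$ et son déterminant est $n^2-a^2+\ldots$ — un calcul direct donne $\det K_n = a^2+n^2-a^2 = $ (à vérifier) — de toute façon les valeurs propres sont $\theta_\pm = -a\pm\sqrt{a^2-n^2}$. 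Sous l'hypothèse $a>n$, la racine $\sqrt{a^2-n^2}$ est réelle, les deux valeurs propres sont réelles distinctes et négatives, et $-\lambda$ (la plus grande) vaut $-a+\sqrt{a^2-n^2}$, ce qui est croissant en $a/n$ donc maximal pour $n=1$ : cela recollera avec l'énoncé de la Proposition \ref{essentiel}.

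L'étape clef est le calcul de la norme d'opérateur de $e^{tK_n}$, matrice $2\times2$ non normale. Je poserais $M=e^{tK_n}$ et calculerais $\|M\|^2 = $ la plus grande valeur propre de $M^*M$. Comme $M$ est de rang $2$ et $M^*M$ est hermitienne $2\times2$, ses deux valeurs propres $s_\pm$ vérifient $s_+s_- = |\det M|^2 = e^{2t\,\mathrm{tr}(\Re K_n)} = e^{-4at}$ et $s_++s_- = \|M\|_{HS}^2 = \mathrm{tr}(M^*M)$. Pour écrire $M$ explicitement, j'utiliserais la formule $e^{tK_n} = e^{-at}\bigl(\cosh(\delta t)\,I + \tfrac{\sinh(\delta t)}{\delta}(K_n+aI)\bigr)$ où $\delta=\sqrt{a^2-n^2}$, puis je calculerais $\mathrm{tr}(M^*M)$ composante par composante : chaque entrée de $M$ est combinaison de $\cosh(\delta t)$, $\sinh(\delta t)$ (pour les termes diagonaux, avec un $\pm in\sinh(\delta t)/\delta$) et $a\sinh(\delta t)/\delta$ (hors diagonale). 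La partie $e^{-2at}$ sort en facteur. Après simplification à l'aide de $\cosh^2-\sinh^2=1$, on obtient $\mathrm{tr}(M^*M) = e^{-2at}\bigl(2 + C\sinh^2(\delta t)\bigr)$ pour une constante $C$ dépendant de $a,n$ (de l'ordre de $(2a^2+\ldots)/\delta^2$), ce qui fournit $s_+$ en résolvant l'équation du second degré : $s_\pm = \tfrac12\bigl(\sigma\pm\sqrt{\sigma^2-4\rho}\bigr)$ avec $\sigma=\mathrm{tr}(M^*M)$, $\rho=e^{-4at}$.

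Le principal obstacle — c'est-à-dire le vrai travail — est alors la \emph{mise en forme} de $s_+$ pour retrouver exactement l'expression annoncée avec $\omega=\sqrt{(a/n)^2-1}$ et $\gamma=e^{-2\sqrt{a^2-n^2}t}$. Il faut factoriser $e^{-2(a-\sqrt{a^2-n^2})t} = e^{-2at}/\gamma$ devant, donc écrire $s_+ = e^{-2(a-\delta)t}\cdot\bigl(1 + \tfrac{2}{(\ldots)}\bigr)$ ; le terme entre parenthèses au dénominateur doit se réexprimer en $\tfrac{1+\gamma}{1-\gamma}$, ce qui suggère d'introduire $\coth(\delta t) = \tfrac{1+\gamma}{1-\gamma}$ et d'utiliser $\sinh^2(\delta t) = 1/(\coth^2(\delta t)-1)$ pour remplacer toutes les fonctions hyperboliques par des expressions rationnelles en $\coth(\delta t)$. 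Le radical $\sqrt{\sigma^2-4\rho}$ devrait se simplifier miraculeusement en quelque chose de la forme $a/n\cdot\sqrt{1+\omega^2\coth^2(\delta t)}$ après avoir divisé par $\gamma$ et utilisé les identités — c'est le calcul algébrique le plus délicat, et je le vérifierais en le faisant dans les deux sens (en partant de l'expression cible et en remontant). Une fois $s_+$ sous la forme voulue, $R(t,a,n)=s_+$ est l'énoncé. Je terminerais éventuellement par une vérification de cohérence : le comportement $t\to0$ doit donner $R\to1$ (et le développement en $t$ plus fin redonnera le $t^3$ de la Proposition), et $t\to+\infty$ donne $\gamma\to0$, $\coth(\delta t)\to1$, d'où $R\sim e^{-2(a-\delta)t}(1+\tfrac{2}{\omega^2+a/n\sqrt{1+\omega^2}-1})$, facteur préexponentiel constant conforme au cas $n=1$ de la Proposition.
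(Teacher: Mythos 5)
Votre preuve est correcte mais emprunte une route genuinement diff\'erente de celle du papier. Vous calculez la plus grande valeur singuli\`ere de $M=e^{tK_n}$ \`a partir de la trace et du d\'eterminant de $M^*M$ : avec $\delta=\sqrt{a^2-n^2}$ on trouve bien $\det(M^*M)=e^{-4at}$ et $\mathrm{tr}(M^*M)=2e^{-2at}\bigl(1+\tfrac{2a^2}{\delta^2}\sinh^2(\delta t)\bigr)$, d'o\`u $s_+=e^{-2at}\bigl(1+\tfrac{2a^2}{\delta^2}\sinh^2(\delta t)+\tfrac{2a}{\delta}\sinh(\delta t)\,Q\bigr)$ avec $Q=\sqrt{1+\tfrac{a^2}{\delta^2}\sinh^2(\delta t)}$. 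La mise en forme que vous laissez en suspens aboutit effectivement \`a l'expression annonc\'ee ; les identit\'es qui font tout marcher sont $e^{2\delta t}=(\cosh(\delta t)+\sinh(\delta t))^2$, $n^2\sinh^2(\delta t)+\delta^2\cosh^2(\delta t)=\delta^2Q^2$ (car $n^2+\delta^2=a^2$) et $\delta^2+2a^2\sinh^2(\delta t)+2a\delta\sinh(\delta t)\,Q=(\delta Q+a\sinh(\delta t))^2$, qui r\'eduisent l'\'egalit\'e \`a v\'erifier \`a une identit\'e polynomiale en $\sinh(\delta t)$, $\cosh(\delta t)$ et $Q$ que j'ai contr\^ol\'ee. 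Le papier, lui, ne passe pas par les valeurs singuli\`eres : il choisit des vecteurs propres unitaires $e_1,e_2$ avec $\langle e_1,e_2\rangle=n/a$, param\`etre $u=re^{i\theta}e_1+e_2$, montre que le rapport $\|e^{t(K_n-\lambda_1 )}u\|^2/\|u\|^2$ est monotone en $\cos\theta$, puis optimise en $r$ gr\^ace au lemme \ref{optf}. Votre m\'ethode est plus syst\'ematique en dimension $2$ (trace et d\'eterminant suffisent) ; celle du papier a l'avantage de se transposer telle quelle, avec le m\^eme lemme \ref{optf}, aux cas $a<n$ (valeurs propres complexes conjugu\'ees) et $a=n$ (bloc de Jordan) trait\'es juste apr\`es. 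Une coquille sans cons\'equence dans votre texte : $\det K_n=n^2$, ce qui redonne bien les valeurs propres $-a\pm\delta$ que vous annoncez.
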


\begin{proof}
 Les deux valeurs propres de $K_n$, réelles, sont $\lambda_1 = -a +n\omega > \lambda_2 = -a -n\omega$. On calcule que $(e_1,e_2)$ sont des vecteurs propres correspondants unitaires ils vérifient $|<e_1,e_2>| = \frac{n}{a}$ (les vecteurs propres sont \og d'autant plus orthogonaux\fg\ que $a$ est loin de $n$), on peut donc choisir $(e_1,e_2)$ unitaires tels que $<e_1,e_2> = \frac{n}{a}$. En posant $u=re^{i\theta}e_1+e_2$ on a ainsi

\begin{eqnarray*}
e^{tK_n}u & = & re^{i\theta}e^{\lambda_1t}e_1+e^{\lambda_2t}e_2\\
\|u\|^2 & = & r^2 + 1 + 2r\frac{n}{a}\cos(\theta)\\
\|e^{tK_n-\lambda_1 t}u\|^2 & = & r^2 + \gamma^ 2 + 2r\gamma\frac{n}{a}\cos(\theta)\\
& = & \|u\|^2 + (\gamma - 1) \times \big[\gamma + 1 + 2r\frac{n}{a}\cos(\theta)\big].
\end{eqnarray*}

En conséquence

\begin{eqnarray*}
\frac{\|e^{tK_n-\lambda_1 t}u\|^2}{\|u\|^2} & = & \frac{r^2 + \gamma^ 2 + 2r\gamma\frac{n}{a}\cos(\theta)}{r^2 + 1 + 2r\frac{n}{a}\cos(\theta)}\\
& = & \gamma + \frac{r^2 + \gamma^ 2 - \gamma r^2 - \gamma}{r^2 + 1 + 2r\frac{n}{a}\cos(\theta)}
\end{eqnarray*}
quantité qui, à $r$ fixé, est monotone en $\cos(\theta)$. Les valeurs extrémales sont donc obtenues avec $\cos(\theta) = 1$ (quitte à prendre $r<0$). On a alors

\begin{eqnarray*}
\frac{\|e^{tK_n-\lambda_1 t}u\|^2}{\|u\|^2} & = & 1 + (\gamma - 1) \times \frac{\gamma + 1 + 2r\frac{n}{a}}{r^2 + 1 + 2r\frac{n}{a}}\\
 & = & 1 - 2\frac{n}{a}(1-\gamma) \times  \frac{(r + \frac{n}{a}) - \frac{n}{a} + \frac{a}{2n}(1+\gamma)}{(r+\frac{n}{a})^2 + 1 - \left(\frac{n}{a}\right)^2}.
\end{eqnarray*}
D'après le lemme \ref{optf},
les valeurs extrêmales sont

\begin{eqnarray*}
\frac{\|e^{tK_n-\lambda_1 t}u\|^2}{\|u\|^2} & = & 1 -  \frac{\left(\frac{n}{a}\right)^2(1-\gamma) }{\left(\frac{n}{a}\right)^2 - \left(\frac{1+\gamma}{2}\right) \pm \sqrt{\left(\frac{1+\gamma}{2}\right)^2- \gamma \left(\frac{n}{a}\right)^2}}.
\end{eqnarray*}
Le maximum est obtenu pour $\pm = -$, et l'on obtient

\begin{eqnarray*}
\|e^{tK_n-\lambda_1 t}\|^2 & = & 1 +  \frac{\left(\frac{n}{a}\right)^2(1-\gamma) }{\left(\frac{1+\gamma}{2}\right)-\left(\frac{n}{a}\right)^2 + \sqrt{\left(\frac{1+\gamma}{2}\right)^2- \gamma \left(\frac{n}{a}\right)^2}}\\
\\
& = &  1 + \frac{2}{\omega^2\left(\frac{1+\gamma}{1-\gamma}\right) + \frac{a}{n}\sqrt{1+\omega^2\left(\frac{1+\gamma}{1-\gamma}\right)^2} -1}.
\end{eqnarray*}
\end{proof}

\begin{lem}
 Si $a<n$ alors pour tout $t>0$

\begin{eqnarray*}
 R(t,a,n) & = & e^{-2at} \times \left(1 + \frac{2}{\sqrt{\frac{\nu_n^2}{a^2}\frac{1}{2(1-\cos(\nu_n t))}+1} -1}\right)
\end{eqnarray*}
avec $\nu_n = 2\sqrt{n^2-a^2}$.
\end{lem}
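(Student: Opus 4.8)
The plan is to reduce, exactly as in the case $a>n$, to a $2\times2$ computation, but to avoid the two–parameter optimisation that the (now complex) eigenvalues of $K_n$ would force, using instead a shortcut through the trace and determinant of $(e^{tK_n})^*e^{tK_n}$.

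Work in the basis $(\un_{\{y=1\}},\un_{\{y=-1\}})$ of $\CC^{\{-1,1\}}$; the scalar product that $V_n$ inherits from $L^2(\mu)$ is $\tfrac12$ times the standard Hermitian product on $\CC^2$, hence induces the same operator norm, and since $V_n\subset\mathrm{Ker}\,\mu$ for $n\geq1$ we have $R(t,a,n)=\|P_t^a\|_{V_n}^2=\|e^{tK_n}\|^2$. In this basis $K_n=\left(\begin{smallmatrix}in-a&a\\ a&-in-a\end{smallmatrix}\right)=-aI+A$ with $A=\left(\begin{smallmatrix}in&a\\ a&-in\end{smallmatrix}\right)$. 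The key algebraic fact is that $A^2=(a^2-n^2)I=-\mu_n^2I$, where $\mu_n\df\sqrt{n^2-a^2}$ is real since $a<n$, and $\mathrm{tr}\,A=0$; the usual power series then gives $e^{tA}=\cos(\mu_n t)\,I+\tfrac{\sin(\mu_n t)}{\mu_n}A$, so that $e^{tK_n}=e^{-at}e^{tA}$.

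Then I would compute $\|e^{tA}\|^2$ as the larger eigenvalue of the positive Hermitian matrix $(e^{tA})^*e^{tA}$. Its determinant equals $|\det e^{tA}|^2=e^{2t\,\mathrm{tr}\,A}=1$, so its two eigenvalues are the roots of $X^2-\mathrm{tr}\big((e^{tA})^*e^{tA}\big)X+1$. From $A+A^*=\left(\begin{smallmatrix}0&2a\\2a&0\end{smallmatrix}\right)$ and $A^*A=(n^2+a^2)I+\left(\begin{smallmatrix}0&-2ian\\2ian&0\end{smallmatrix}\right)$ one obtains $\mathrm{tr}\big((e^{tA})^*e^{tA}\big)=2\cos^2(\mu_n t)+\tfrac{2(n^2+a^2)}{\mu_n^2}\sin^2(\mu_n t)=2(1+h)$, where $h\df\tfrac{2a^2}{\mu_n^2}\sin^2(\mu_n t)\geq0$ (using $\mu_n^2=n^2-a^2$). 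Hence $\|e^{tA}\|^2=1+h+\sqrt{h(h+2)}$ and $R(t,a,n)=e^{-2at}\big(1+h+\sqrt{h(h+2)}\big)$. It then only remains to recognise the stated expression: since $\nu_n=2\mu_n$ we have $1-\cos(\nu_n t)=2\sin^2(\mu_n t)$ and $\nu_n^2=4\mu_n^2$, hence $\tfrac{\nu_n^2}{a^2}\tfrac{1}{2(1-\cos(\nu_n t))}+1=\tfrac{2+h}{h}$, and with $w\df\sqrt{(2+h)/h}$ (so $h=2/(w^2-1)$) a one–line manipulation gives $1+\tfrac{2}{w-1}=1+(w+1)h=1+h+\sqrt{h(h+2)}$, which is exactly the bracketed factor.

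There is no serious obstacle: this is a finite–dimensional computation. The one point to treat with care is the degenerate times $t\in\tfrac{\pi}{\mu_n}\ZZ$, where $\sin(\mu_n t)=0$, i.e.\ $h=0$, so the right–hand side must be read as its limit $e^{-2at}$, which still equals $R(t,a,n)$ there. If one wished to stay parallel to the proof of the case $a>n$, one would instead observe that the normalised eigenvectors $e_1,e_2$ of $K_n$ for the eigenvalues $-a\pm i\mu_n$ satisfy $|\langle e_1,e_2\rangle|=a/n$ and write $u=re^{i\theta}e_1+e_2$, but one would then have to maximise $\big(r^2+1+\tfrac{2a}{n}r\cos(\theta+\nu_n t)\big)\big/\big(r^2+1+\tfrac{2a}{n}r\cos\theta\big)$ over both $r$ and $\theta$ — a genuine two–variable problem, heavier than the trace–determinant route.
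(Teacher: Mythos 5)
Your proof is correct, but it follows a genuinely different route from the paper's. The paper stays with the strategy of the case $a>n$: it picks normalised eigenvectors $e_1,e_2$ of $K_n$ for the conjugate eigenvalues $-a\pm i\sqrt{n^2-a^2}$ (with $\langle e_1,e_2\rangle=a/n$), writes $u=e_1+re^{i\theta}e_2$, and maximises the ratio $\|e^{t(K_n-\lambda)}u\|^2/\|u\|^2$ over $(r,\theta)$ — precisely the two-variable problem you mention at the end — by first reducing to $r=\pm1$ via Lemme \ref{optf} and then invoking Lemme \ref{optg} of the appendix, which computes $\sup_\theta\frac{\alpha+\cos(\theta-\nu_n t)}{\alpha+\cos\theta}$ for $\alpha=n/a>1$ and yields exactly the bracketed factor. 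You instead bypass the eigenbasis altogether: writing $K_n=-aI+A$ with $A^2=-\mu_n^2I$ and $\trace A=0$, you get $e^{tA}$ in closed form and read off $\|e^{tA}\|^2$ as the largest root of $X^2-\trace\lt((e^{tA})^*e^{tA}\rt)X+1$, since $\dete e^{tA}=1$. All your intermediate identities check out ($\trace\lt((e^{tA})^*e^{tA}\rt)=2(1+h)$ with $h=\tfrac{2a^2}{\mu_n^2}\sin^2(\mu_n t)$, and $1+h+\sqrt{h(h+2)}$ does coincide with the stated factor after the substitution $\nu_n=2\mu_n$). Your route is shorter, self-contained (no appendix lemmas), and handles cleanly the degenerate times $t\in\frac{\pi}{\mu_n}\ZZ$ where the stated formula must be read as a limit — a point the paper leaves implicit. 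What the paper's route buys in exchange is Lemme \ref{optg} itself, whose extra conclusion (the maximum is bounded by $\frac{\alpha+1}{\alpha-1}$, attained only at $s=\pi$) is reused later in Lemme \ref{gn} to identify $\limsup_t g(t)=\frac{1+a}{1-a}$; with your approach one would recover that bound by maximising $1+h+\sqrt{h(h+2)}$ over $h\in[0,2a^2/\mu_n^2]$, which is easy but would need to be said.
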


\begin{proof}
 Dans ce cas les valeurs propres de $K_n$ sont complexes conjuguées, $\lambda_1 = \bar{\lambda}_2 = \lambda = -a + i\sqrt{n^2-a^2}$, de partie réelle $a$. On trouve des vecteurs propres normés associés $e_1$ et $e_2$ vérifiant $<e_1,e_2> = \frac{a}{n}$ (là encore le produit scalaire des vecteurs propres tend vers 0 à mesure que $a$ et $n$ s'éloignent).

 Posons $u = e_1 + r e^{i\theta}e_2$ avec $r\in\mathbb{R}$ et $\theta\in]-\pi,\pi]$. On a alors $e^{tK_n} u = e^{\lambda t}\lt(e_1 + r e^{i\theta}e^{-2i\sqrt{n^2-a^2}}e_2\rt)$, et ainsi
\begin{eqnarray*}
\|u\|^2 & = & r^2 + 1 + 2 r\frac{a}{n}\cos(\theta)\\
\|e^{tK_n - t\lambda}u\|^2 & = & r^2 + 1 + 2 r\frac{a}{n}\cos(\theta-2t\sqrt{n^2-a^2})
\end{eqnarray*}
Par le lemme \ref{optf} on obtient que le rapport entre les deux est extrémal pour $r=\pm 1$, on est donc ramené à

\[\|e^{tK_n - t\lambda}\|^2 = \underset{\theta\in\mathbb T}{\sup} \frac{\alpha_n + \cos(\theta-\nu_n t)}{\alpha_n + \cos(\theta)}\]
avec $\alpha_n = \frac{n}{a}>1$. Le lemme \ref{optg} de l'appendice conclut.
\end{proof}

\begin{lem}
 Si $a=n$ alors pour tout $t>0$
\begin{eqnarray*}
 R(t,a,n) & = & e^{-2at} \times \left(1 + \frac{2}{\sqrt{1+\frac{1}{n^2t^2}}-1}\right)
\end{eqnarray*}
\end{lem}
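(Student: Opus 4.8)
The plan is to treat the case $a=n$ as the degenerate limit of the two preceding ones: here the $2\times 2$ matrix $K_n$ has a single, double eigenvalue, so it is \emph{not} diagonalizable and the eigenvector parametrization used above is unavailable; instead I would write $e^{tK_n}$ as $e^{-at}$ times the exponential of a nilpotent matrix and compute the operator norm directly.

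First, in the canonical basis of $\mathbb{C}^{\{-1,1\}}$ — which, transported to $V_n\subset L^2(\mu)$, is an orthogonal family of two vectors of equal norm, so that $R(t,a,n)=\|e^{tK_n}\|^2$ is exactly the squared operator norm of the matrix for the standard Hermitian structure — one has
\[ K_n=\begin{pmatrix} in-a & a \\ a & -in-a\end{pmatrix}, \]
with trace $-2a$ and determinant $n^2$, hence characteristic polynomial $\lambda^2+2a\lambda+n^2$, which equals $(\lambda+a)^2$ precisely when $a=n$. So $-a$ is a double eigenvalue; since the off-diagonal entries do not vanish, $K_n\neq -aI$. Setting $N:=K_n+aI=n\begin{pmatrix} i&1\\1&-i\end{pmatrix}$, Cayley--Hamilton (or a one-line check) gives $N^2=0$, whence $e^{tK_n}=e^{-at}(I+tN)$ and $R(t,a,n)=e^{-2at}\,\|I+tN\|^2$.

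It then remains to compute $\|I+tN\|^2$, the largest eigenvalue of the Hermitian matrix $(I+tN)(I+tN)^{*}=I+t(N+N^{*})+t^2NN^{*}$. A short calculation yields $N+N^{*}=2a\begin{pmatrix}0&1\\1&0\end{pmatrix}$ and $NN^{*}=2a^2\begin{pmatrix}1&i\\-i&1\end{pmatrix}$, so that this matrix is $\begin{pmatrix} 1+2a^2t^2 & 2at+2ia^2t^2\\ 2at-2ia^2t^2 & 1+2a^2t^2\end{pmatrix}$: its diagonal entries are equal to $1+2a^2t^2$ and the modulus of its off-diagonal entry is $2at\sqrt{1+a^2t^2}$, so its eigenvalues are $1+2a^2t^2\pm 2at\sqrt{1+a^2t^2}$ and $\|I+tN\|^2=1+2a^2t^2+2at\sqrt{1+a^2t^2}$. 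Rationalizing the denominator and using $a=n$ gives, for $t>0$, $1+2a^2t^2+2at\sqrt{1+a^2t^2}=1+\dfrac{2}{\sqrt{1+1/(n^2t^2)}-1}$, which is the claimed identity.

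There is no genuine difficulty here: once the nilpotent (Jordan-block) decomposition is recognized the computation is routine, the only mildly delicate point being the final algebraic simplification. As a consistency check, the same expression is recovered by letting $a\to n$ in either of the two previous lemmas — in the first, $\omega\to 0$ and $\gamma\to1$ with $\omega\big(\tfrac{1+\gamma}{1-\gamma}\big)\to\tfrac1{nt}$ and $\omega^2\big(\tfrac{1+\gamma}{1-\gamma}\big)\to 0$; in the second, $\nu_n\to 0$ with $1-\cos(\nu_n t)\sim\tfrac12\nu_n^2t^2$ — since $a\mapsto\|e^{tK_n^{(a)}}\|$ is continuous; in fact this continuity argument could replace the direct computation altogether.
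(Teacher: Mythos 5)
Votre preuve est correcte, mais elle suit une route sensiblement différente de celle du papier. Vous travaillez dans la base canonique de $\mathbb{C}^{\{-1,1\}}$ (orthogonale et de vecteurs de même norme pour $L^2(\mathcal U_{\{-1,1\}})$, donc la norme d'opérateur est bien la norme spectrale usuelle de la matrice), vous isolez la partie nilpotente $N=K_n+aI$ avec $N^2=0$, d'où $e^{tK_n}=e^{-at}(I+tN)$, puis vous calculez $\|I+tN\|^2$ comme la plus grande valeur propre de la matrice hermitienne $(I+tN)(I+tN)^*$ — autrement dit via la plus grande valeur singulière ; tous les calculs intermédiaires ($N+N^*$, $NN^*$, les valeurs propres $1+2a^2t^2\pm 2at\sqrt{1+a^2t^2}$, et la simplification finale) sont exacts. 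Le papier, lui, reste fidèle au canevas des deux lemmes précédents : il exhibe une base de Jordan explicite ($g_1(y)=1+iy$, $g_2(y)=1/n$), la renormalise en des vecteurs unitaires non orthogonaux avec $\langle e_1,e_2\rangle=1/\sqrt2$, paramètre un vecteur générique $u=(x+iy)e_1+e_2$ et maximise le quotient de Rayleigh à la main à l'aide du lemme \ref{optf}. Votre approche est plus autonome et plus standard (elle évite le lemme d'optimisation auxiliaire et toute discussion de base non orthogonale) ; celle du papier a l'avantage de l'uniformité avec les cas $a\gtrless n$, traités par le même schéma. Enfin, votre remarque de clôture — que le résultat s'obtient aussi par continuité en faisant $a\to n$ dans l'un ou l'autre des deux lemmes précédents, avec les équivalents que vous indiquez — coïncide exactement avec la remarque que le papier place juste après sa démonstration.
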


\begin{proof}
 Dans ce cas $-n$ est valeur propre double de $K_n$. Considérons la base $g_1(y) = 1 +iy$ et $g_2(y) = \frac{1}{n}$ de $\mathbb{C}^{\{-1,1\}}$. La matrice de $K_n$ dans cette base est alors un bloc de Jordan, d'exponentielle $e^{-nt}\begin{pmatrix}1 & t\\ 0 & 1\end{pmatrix}$. En renormalisant $g_1$ et $g_2$, on obtient des vecteurs de base unitaires $e_1$ et $e_2$ avec $<e_1,e_2> = \frac{1}{\sqrt 2}$, $e^{tK_n} e_1= e^{-nt}e_1$ et $e^{tK_n} e_2= e^{-nt}(e_2 + \sqrt 2 nte_1)$. En posant $u=(x+iy)e_1+e_2$, on a ainsi

\begin{eqnarray*}
e^{tK_n}u & = & e^{-nt}(u + \sqrt 2 nt e_1)\\
\|u\|^2 & = & x^2+y^2 + 1 + \sqrt 2 x\\
\|e^{tK_n+nt}u\|^2 & = & \|u\|^2 + 2n^2t^2 + 2\sqrt 2 nt\lt(x + \frac1{\sqrt{2}}\rt)\\
\end{eqnarray*}
Le rapport $\frac{||e^{tK_n+nt}u||^2}{||u||^2}$ est donc optimal pour $y=0$. Reste à choisir $x$.

\begin{eqnarray*}
\frac{\|e^{tK_n+nt}u\|^2}{\|u\|^2}& = & 1 + 2\sqrt 2 nt \times \frac{x + \frac1{\sqrt{2}} + \frac{nt}{\sqrt{2}}}{(x+\frac1{\sqrt{2}})^2 + \frac12}\\
\end{eqnarray*}
D'après le lemme \ref{optf}, les valeurs extrêmales sont

\begin{eqnarray*}
\frac{\|e^{tK_n+nt}u\|^2}{\|u\|^2}& = & 1 + \sqrt 2 nt \times \frac{1}{-\frac{nt}{\sqrt 2} \pm \sqrt{\frac{n^2t^2}{2}+\frac12}}\\
\end{eqnarray*}
et le maximum est obtenu pour $\pm = +$, ce qui donne le résultat escompté.
\end{proof}
Remarquons qu'on aurait pu obtenir ce résultat par continuité à partir des cas $a\lessgtr n$.

\subsection*{Comparaison des $R(t,a,n)$}

Il s'agit maintenant de comparer les normes de ces restrictions entre elles. Un développement limité en $t=0$ montre que $R(t,a,n) = 1 - \frac{n^3}{3} t^3 + o(t^3)$ pour $a\geq n$ et $R(t,a,n) = 1 - \frac{an^2}{3} t^3 + o(t^3)$ pour $a\leq n$, ce qui laisse penser qu'au moins au début $R(t,a,1)$ prévaut (autrement dit que l'erreur décroit lentement sur $V_1$ les fonctions de grande longueur d'onde en $x$). D'autre part, si $a>1$, c'est aussi sur $V_1$ que se trouve la droite propre associée à la valeur propre de $\mathcal L$ de plus grande partie réelle, c'est donc également $R(t,a,1)$ qui devrait prévaloir asymptotiquement. En fait nous allons voir que, pour l'essentiel, seule compte cette norme sur $V_1$. Notons que les expressions calculés pour $R(t,a,n)$ permettent d'étendre leur définition à $n$ non entier et qu'alors $n\in\,]0,+\infty[\ \mapsto R(t,a,n)$ est continue.

\bigskip

Dans un premier temps, on peut dériver $R(t,a,n)$ pour $n\in]0,a[$. Le lemme \ref{decroi} de l'annexe montre que cette dérivée est négative et ainsi $\underset{1\leq n<a}{\max}R(t,a,n) = R(t,a,1)$ pour tout $t>0$. Par continuité on a même $\underset{1\leq n\leq a}{\max}R(t,a,n) = R(t,a,1)$. Ainsi a-t-on réglé les cas $a\geq 1$ du théorème \ref{essentiel}, puisqu'alors $\|P_t - \mu\| = $\linebreak $\underset{n \in \mathbb Z^*}{\max}R(t,a,n) = R(t,a,1)$.

\bigskip

Le cas des $n>a$ est un peu plus délicat, pour qui 

\[R(t,a,n) = e^{-ta} \sqrt{g_n(t)}\]
avec, si $\nu_n = 2\sqrt{n^2-a^2}$,
\[g_n(t) = 1 + \frac{2}{\sqrt{\frac{\nu_n^2}{a^2}\frac{1}{2(1-\cos(\nu_n t))}+1} -1}\]


\noindent qui est $2\pi/\nu_n$ périodique. Calculer le supremum des $g_n$ pour tout $t$ est à peu près impossible du fait des périodes incommensurables (cf. figure \ref{varier}). Cependant on peut penser (d'après le développement limité en 0) qu'en temps petit la norme prépondérante correspond à $n$ minimal et qu'elle le reste jusqu'à ce que $g_n$ atteigne son maximum. C'est effectivement le cas, comme on va le montrer dans un instant. Ensuite le suprémum des $g_k$ oscillera entre ce maximum et 1.

\begin{lem}
 Si $k<n$ alors pour tout $t \in \left[0,\frac{\pi}{\nu_k}\right]$ on a $g_k(t) \geq g_n(t)$.
\end{lem}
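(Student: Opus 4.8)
The quantity to control is
\[
g_n(t) \;=\; 1 + \frac{2}{\sqrt{\dfrac{\nu_n^2}{a^2}\,\dfrac{1}{2(1-\cos(\nu_n t))}+1} -1},
\]
which is a decreasing function of the single combination
\[
\Phi_n(t) \;\df\; \frac{\nu_n^2}{a^2}\cdot\frac{1}{2(1-\cos(\nu_n t))}
\;=\; \frac{\nu_n^2}{2a^2}\cdot\frac{1}{1-\cos(\nu_n t)}.
\]
Indeed $x \mapsto 1 + 2/(\sqrt{x+1}-1)$ is strictly decreasing on $(0,\infty)$, so the claimed inequality $g_k(t)\geq g_n(t)$ for $k<n$ and $t\in[0,\pi/\nu_k]$ is \emph{equivalent} to $\Phi_k(t)\leq \Phi_n(t)$ on that interval. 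So the first step is to reduce to proving $\Phi_k(t)\le\Phi_n(t)$.

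Writing $\psi(u) \df \dfrac{u}{1-\cos u}$ for $u\in(0,2\pi)$, we have $\Phi_n(t) = \dfrac{\nu_n}{2a^2}\,\psi(\nu_n t)$, so with $u_k=\nu_k t$ and $u_n=\nu_n t$ (note $u_k<u_n$ since $\nu_k<\nu_n$ because $k<n$, and $u_k\in[0,\pi]$ by hypothesis while $u_n$ could exceed $\pi$) the inequality becomes
\[
\nu_k\,\psi(\nu_k t)\;\le\;\nu_n\,\psi(\nu_n t),
\qquad\text{i.e.}\qquad
\frac{\nu_k t}{1-\cos(\nu_k t)}\cdot\nu_k\;\le\;\frac{\nu_n t}{1-\cos(\nu_n t)}\cdot\nu_n .
\]
Cancelling $t$, this is exactly
\[
\frac{\nu_k^2}{1-\cos(\nu_k t)}\;\le\;\frac{\nu_n^2}{1-\cos(\nu_n t)} .
\]
So I want: the map $\nu \mapsto h_t(\nu)\df \dfrac{\nu^2}{1-\cos(\nu t)}$ is nondecreasing in $\nu>0$ for each fixed $t$, at least on the range of $\nu$ with $\nu t\in(0,\pi]$ — and then separately I must handle the case $u_n=\nu_n t>\pi$, where $1-\cos(\nu_n t)$ can get as large as $2$ but the numerator $\nu_n^2$ has grown.

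The clean route is the substitution $v=\nu t$: then $h_t(\nu) = \dfrac{1}{t^2}\cdot\dfrac{v^2}{1-\cos v}$, so monotonicity of $h_t$ in $\nu$ on the relevant range is equivalent to monotonicity of $\varphi(v)\df \dfrac{v^2}{1-\cos v}$ on the corresponding range of $v$. On $(0,\pi]$, $\varphi$ is increasing: differentiate, or use $1-\cos v = 2\sin^2(v/2)$ so that $\varphi(v) = \dfrac{v^2}{2\sin^2(v/2)} = \dfrac12\big(\tfrac{v/2}{\sin(v/2)}\big)^2\cdot 2 = \dfrac{(v/2)^2}{\sin^2(v/2)}\cdot 2$; wait, more simply $\varphi(v) = 2\big(\tfrac{v/2}{\sin(v/2)}\big)^2$, and $w\mapsto w/\sin w$ is increasing on $(0,\pi/2]$, which gives it on $(0,\pi]$ for $\varphi$. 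Since $u_k=\nu_k t\le\pi$ we have $\varphi(u_k)\le\varphi(v)$ for all $v\in[u_k,\pi]$. If $u_n=\nu_n t\le\pi$ too, monotonicity on $(0,\pi]$ finishes it directly: $\varphi(u_k)\le\varphi(u_n)$, hence $h_t(\nu_k)\le h_t(\nu_n)$, hence $\Phi_k(t)\le\Phi_n(t)$, hence $g_k(t)\ge g_n(t)$. If $u_n>\pi$, I note that $\varphi$ on $(0,2\pi)$ attains its minimum... no: I instead use that for $v\ge\pi$, $1-\cos v\le 2$ so $\varphi(v)\ge v^2/2\ge \pi^2/2$, while $\varphi(u_k)\le\varphi(\pi)=\pi^2/2$ since $\varphi$ is increasing on $(0,\pi]$ — so again $\varphi(u_k)\le\pi^2/2\le\varphi(u_n)$.

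\textbf{Main obstacle.} The only delicate point is confirming the elementary monotonicity facts cleanly: that $w\mapsto w/\sin w$ is increasing on $(0,\pi)$ (standard: $\tan w\ge w$), and handling the regime $\nu_n t>\pi$ via the crude bound $1-\cos\le 2$ as above. Everything else — the reduction through the strictly decreasing function $x\mapsto 1+2/(\sqrt{x+1}-1)$, the substitution $v=\nu t$, the cancellation of $t$ — is bookkeeping. I would carry it out in the order: (1) reduce $g_k\ge g_n$ to $\Phi_k\le\Phi_n$ using monotonicity of $x\mapsto 1+2/(\sqrt{x+1}-1)$; (2) rewrite $\Phi_n(t)=\tfrac{1}{2a^2 t^2}\,\varphi(\nu_n t)$ with $\varphi(v)=v^2/(1-\cos v)=2\,(v/2)^2/\sin^2(v/2)$; (3) prove $\varphi$ increasing on $(0,\pi]$ and $\varphi(v)\ge\varphi(\pi)$ for $v\in[\pi,2\pi)$; (4) conclude, splitting on whether $\nu_n t\le\pi$, using $\nu_k t\le\pi$.
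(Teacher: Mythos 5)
Your proof is correct, and it treats the key technical step by a genuinely different (and arguably cleaner) argument than the paper. Both proofs start from the same reduction: since $x\mapsto 1+2/(\sqrt{x+1}-1)$ is decreasing, the claim $g_k(t)\ge g_n(t)$ is equivalent to $\frac{1-\cos(\nu_n t)}{\nu_n^2}\le\frac{1-\cos(\nu_k t)}{\nu_k^2}$. From there the paper splits the time interval: on $[0,\pi/\nu_n]$ it observes that both sides vanish together with their first derivatives at $t=0$ and compares second derivatives, i.e.\ uses $\cos(\nu_n t)\le\cos(\nu_k t)$ on that range; then on $[\pi/\nu_n,\pi/\nu_k]$ it chains $g_k(t)\ge g_k(\pi/\nu_n)\ge g_n(\pi/\nu_n)\ge g_n(t)$, exploiting that $g_k$ is still increasing there while $g_n$ has already passed its maximum at $\pi/\nu_n$. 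You instead rescale time, rewriting the reduced inequality as $\varphi(\nu_k t)\le\varphi(\nu_n t)$ with $\varphi(v)=v^2/(1-\cos v)=2\bigl(\tfrac{v/2}{\sin(v/2)}\bigr)^2$, and conclude from the monotonicity of $w\mapsto w/\sin w$ on $(0,\pi/2]$ together with the crude bound $\varphi(v)\ge v^2/2\ge\pi^2/2=\varphi(\pi)$ for $\nu_n t\ge\pi$ (which also covers, with the convention $\varphi=+\infty$, the times where $\cos(\nu_n t)=1$ and $g_n=1$; the endpoint $t=0$ is a trivial equality). Your route replaces the paper's two-interval case analysis in $t$ by a single monotonicity statement about one explicit function, at the cost of one extra elementary fact ($\tan w\ge w$); the paper's version reuses observations (monotonicity of $g_k$ up to $\pi/\nu_k$, location of the maximum of $g_n$) that it needs again in the study of $\sup_n g_n$. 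Both are complete and correct.
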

\begin{proof}
 \begin{eqnarray*}
 g_n(t) \leq g_k(t) & \Leftrightarrow & 1 + \frac{2}{\sqrt{\frac{\nu_n^2}{a^2}\frac{1}{2(1-\cos(\nu_n t))}+1} -1} \leq 1 + \frac{2}{\sqrt{\frac{\nu_k^2}{a^2}\frac{1}{2(1-\cos(\nu_k t))}+1} -1}\\
\\
& \Leftrightarrow & \frac{1-\cos(\nu_n t)}{\nu_n^2} \leq \frac{1-\cos(\nu_k t)}{\nu_k^2}
\end{eqnarray*}

Ces deux termes sont égaux et de dérivées égales en $t=0$, pour les comparer il suffit donc de comparer leurs dérivées secondes. Or, si $\nu_n \geq \nu_k$ alors $\cos(\nu_n t) \leq \cos(\nu_k t)$ pour $t\in\left[0,\frac{\pi}{\nu_n}\right]$, et donc $g_n(t) \leq g_k(t)$ pour ces $t$. Puisque $g_k$ est croissante sur $\left[0,\frac{\pi}{\nu_k}\right]$ on a pour $t \in \left[\frac{\pi}{\nu_n},\frac{\pi}{\nu_k}\right]$ 

\[g_k(t) \geq g_k(\frac{\pi}{\nu_n}) \geq g_n(\frac{\pi}{\nu_n}) \geq g_n(t).\]
On achève en constatant que $\nu_n$ est croissante en $n$.
\end{proof}

\begin{lem}
 Si $n>a$ alors pour tout $t>0$ on a $R(t,a,n) \leq R(t,a,a)$.
\end{lem}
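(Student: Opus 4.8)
The plan is to reduce this to the previous lemma by a continuity/limiting argument, passing from integer-type parameters to the critical value $n=a$. Recall that for $n>a$ we have $R(t,a,n)=e^{-at}\sqrt{g_n(t)}$ with $g_n$ expressed through $\nu_n=2\sqrt{n^2-a^2}$, and that the case $a=n$ gives $R(t,a,a)=e^{-at}\sqrt{1+2/(\sqrt{1+1/(a^2t^2)}-1)}$. Since the formula for $R(t,a,n)$ in the regime $n>a$ depends on $n$ only through $\nu_n$, and $\nu_n\to 0$ as $n\downarrow a$, the natural idea is: first show that $g_n(t)$ is monotone in $\nu_n$ for fixed $t$ (equivalently, monotone in $n$ on $]a,+\infty[$), and then check that the limit as $\nu_n\to 0^+$ is exactly the value $1+2/(\sqrt{1+1/(a^2t^2)}-1)$ coming from the case $a=n$.

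Concretely, first I would rewrite the inequality $R(t,a,n)\le R(t,a,a)$, after dividing by $e^{-at}$ and squaring, as $g_n(t)\le g_a(t)$ where $g_a(t)\df 1+2/(\sqrt{1+1/(a^2t^2)}-1)$. As in the proof of the preceding lemma, $g_n(t)\le g_a(t)$ is equivalent to
\begin{eqnarray*}
\frac{1-\cos(\nu_n t)}{\nu_n^2}&\ge&\frac{1}{2a^2t^2}\cdot a^2 t^2\cdot\frac{1}{a^2}\quad\text{(the analogous quantity for the limiting block-Jordan case)},
\end{eqnarray*}
i.e. to $\dfrac{1-\cos(\nu_n t)}{\nu_n^2}\ge \dfrac{t^2}{4}\cdot\dfrac{1}{?}$; more cleanly, one checks that the right-hand quantity attached to the case $a=n$ is obtained precisely as $\lim_{\nu\to 0}\frac{1-\cos(\nu t)}{\nu^2}=\frac{t^2}{2}$. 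So the whole claim amounts to the elementary inequality
\begin{eqnarray*}
\fo s>0,\qquad \frac{1-\cos(s)}{s^2}&\le&\frac12,
\end{eqnarray*}
applied with $s=\nu_n t$, together with the identification of the $\nu\to 0$ limit with $g_a$. Both facts are standard: the inequality follows from $1-\cos s\le s^2/2$ (integrate $\sin u\le u$ twice, or compare Taylor series), and the limit computation is a direct expansion inside the formula for $g_n$.

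I would then assemble the argument: from $\frac{1-\cos(\nu_n t)}{\nu_n^2}\le\frac{t^2}{2}$ one gets, after substituting back into the definitions, that $\sqrt{\frac{\nu_n^2}{a^2}\frac{1}{2(1-\cos(\nu_n t))}+1}\ge\sqrt{\frac{1}{a^2t^2}+1}$, hence the denominator $\sqrt{\cdots}-1$ is larger for $g_n$ than for $g_a$, hence $g_n(t)\le g_a(t)$, hence $R(t,a,n)\le R(t,a,a)$ for all $t>0$. An alternative, perhaps cleaner, route is to invoke the preceding lemma with $k$ arbitrarily close to $a$: it gives $g_k(t)\ge g_n(t)$ on $[0,\pi/\nu_k]$; since $\nu_k\to 0$ as $k\downarrow a$, the interval $[0,\pi/\nu_k]$ exhausts $]0,+\infty[$, and $g_k(t)\to g_a(t)$ by continuity, yielding $g_a(t)\ge g_n(t)$ for every $t>0$.

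The main obstacle is mostly cosmetic: making sure the limiting identification $\lim_{k\downarrow a} R(t,a,k)=R(t,a,a)$ is rigorous (the formulas for $k>a$ and for $k=a$ are given by different expressions, so one must expand carefully as $\nu_k\to 0$), and making sure one is consistently on the branch $n>a$ rather than accidentally comparing with the $n<a$ regime. Neither is deep, but the $\nu\to 0$ expansion inside $g_n$ is the one place where a sign or a factor could slip, so that is where I would be most careful. Everything else is the elementary bound $1-\cos s\le s^2/2$.
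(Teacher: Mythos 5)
Your proposal is correct, and it actually contains two valid arguments. Your ``alternative, perhaps cleaner, route'' is exactly the paper's proof: apply the preceding lemma with $k=a+\varepsilon$, observe that $\nu_{a+\varepsilon}\to 0$ so the validity interval $[0,\pi/\nu_{a+\varepsilon}]$ exhausts $]0,+\infty[$, and conclude by the continuity of $n\mapsto R(t,a,n)$ (which the paper has set up beforehand by extending the formulas to non-integer $n$). Your first, direct route is genuinely different and arguably sharper: dividing by $e^{-2at}$, the claim $g_n(t)\le g_a(t)$ reduces to $\frac{\nu_n^2}{2(1-\cos(\nu_n t))}\ge\frac{1}{t^2}$, i.e.\ to the elementary bound $1-\cos s\le s^2/2$ with $s=\nu_n t$ (and the case $\cos(\nu_n t)=1$ is trivial since then $g_n(t)=1$). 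This avoids the limiting argument, the monotonicity of $g_k$ on $[0,\pi/\nu_k]$, and the appeal to continuity in $n$, and it gives the inequality for all $t>0$ in one stroke; the price is that it does not reuse the preceding lemma, whereas the paper's proof is a one-line corollary of it. One cosmetic caveat: the intermediate display in your first route (the line with the stray factor and the ``$?$'') is garbled --- the correct pivot is simply $\lim_{\nu\to 0}\frac{1-\cos(\nu t)}{\nu^2}=\frac{t^2}{2}$ --- but your final assembly of the inequality chain is stated correctly, so the argument stands.
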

\begin{proof}
 D'après le lemme précédent, pour tout $\varepsilon > 0$ on a $R(t,a,n) \leq R(t,a,a+\varepsilon)$ pour $t\leq \frac{\pi}{\nu_{a+\varepsilon}}$ ; or $\nu_{a+\varepsilon} \underset{\varepsilon \rightarrow 0}{\longrightarrow} 0$ et la continuité de $R$ conclut.
\end{proof}

En particulier si $a\geq 1$ pour tout $t$ on aura $||P_t-\mu|| = R(t,a,1)$, ce qui démontre les deux tiers du théorème \ref{essentiel}. Pour $a<1$ on peut comparer plus finement les $g_n$ :

\begin{lem}\label{gn}
Soit $g(t) = \underset{n\in \mathbb N}{\sup} g_n(t)$. Si $t\leq \frac{\pi}{\nu_1}$ alors $g(t) = g_1(t)$, et d'autre part
\begin{eqnarray*}
\underset{t\rightarrow+\infty}{\limsup}\  g(t) & = & \frac{1+a}{1-a}\qquad (=\sup g)\\
\\
\underset{t\rightarrow+\infty}{\liminf}\  g(t) & = & 1\qquad (=\inf g)
\end{eqnarray*}
\end{lem}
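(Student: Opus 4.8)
Le point de départ est la formule $g_n(t) = 1 + 2/\bigl(\sqrt{\tfrac{\nu_n^2}{a^2}\tfrac{1}{2(1-\cos(\nu_n t))}+1}-1\bigr)$ : comme $t\mapsto 1/\bigl(\sqrt{x}-1\bigr)$ est décroissant pour $x>1$, la fonction $g_n$ est une fonction décroissante de la quantité $h_n(t)\df \nu_n^{-2}\bigl(1-\cos(\nu_n t)\bigr)^{-1}$, donc croissante de $\varphi_n(t)\df \nu_n^{-2}\bigl(1-\cos(\nu_n t)\bigr)$. Ainsi $\sup g$ est atteint là où un des $\varphi_n$ est maximal, c'est-à-dire $\cos(\nu_n t)=-1$, ce qui donne $\varphi_n = 2/\nu_n^2$ et $g_n = 1 + 2/\bigl(\sqrt{\nu_n^2/(4a^2)+1}-1\bigr)$ ; cette dernière expression est \emph{décroissante} en $\nu_n$, donc en $n$, de sorte que $\sup_n \sup_t g_n(t)$ est réalisé pour $n=1$, avec $\nu_1 = 2\sqrt{1-a^2}$ ; un calcul direct donne alors $1 + 2/\bigl(\sqrt{(1-a^2)/a^2+1}-1\bigr) = 1 + 2/\bigl(1/a - 1\bigr) = (1+a)/(1-a)$. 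D'où $\sup g = (1+a)/(1-a)$. Symétriquement $\inf_t g_n(t)$ s'obtient en $\cos(\nu_n t)$ proche de $1$, où $h_n\to+\infty$ donc $g_n\to 1$ ; comme $g_n\geq 1$ toujours, $\inf g = 1$.

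Pour la partie $t\leq \pi/\nu_1$, j'invoque directement le lemme précédent (celui affirmant que $g_k(t)\geq g_n(t)$ pour $k<n$ et $t\in[0,\pi/\nu_k]$) : avec $k=1$, pour $t\leq \pi/\nu_1$ on a $g_1(t)\geq g_n(t)$ pour tout $n\geq 2$, et comme $g_1(t)\geq 1 = \lim$ des cas dégénérés, le supremum est bien $g_1(t)$. (On peut aussi remarquer que le supremum sur $n$ réel $\geq 1$ redonne la même chose par continuité et monotonie, comme on l'a fait pour le lemme $R(t,a,n)\leq R(t,a,a)$.)

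Restent les identités $\limsup_{t\to\infty} g(t) = \sup g$ et $\liminf_{t\to\infty} g(t) = \inf g$ : l'inégalité $\limsup \leq \sup$ et $\liminf \geq \inf$ est triviale, il faut donc montrer qu'on approche arbitrairement près du sup et de l'inf pour des temps aussi grands que l'on veut. Pour le $\liminf$ : $g_1(t)\to 1$ quand $\cos(\nu_1 t)\to 1$, ce qui arrive pour $t = 2k\pi/\nu_1$ avec $k\to\infty$ ; mais il faut aussi que \emph{tous} les autres $g_n(2k\pi/\nu_1)$ soient proches de $1$ simultanément, ce qui n'est pas automatique. L'argument standard est l'équirépartition : la suite des phases $(\nu_n t \bmod 2\pi)_{n}$, ou plutôt, à $t$ variable, le flot $t\mapsto (\nu_n t \bmod 2\pi)_{n\geq 1}$ est dense (récurrence de Poincaré / théorème de Weyl sur le tore, les $\nu_n$ étant $\QQ$-linéairement indépendants pour presque tout choix, ou au moins les $\nu_n$ formant un ensemble infini permettant de rendre $1-\cos(\nu_n t)$ petit simultanément sur les premiers indices) ; il suffit en réalité de contrôler un nombre \emph{fini} d'indices car $g_n(t)-1 \leq 2/\bigl(\sqrt{\nu_n^2/(4a^2)+1}-1\bigr) \sim 4a^2/\nu_n^2 \to 0$ uniformément en $t$, donc pour $n$ assez grand $g_n$ est uniformément proche de $1$. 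On choisit donc $N$ tel que $\sup_{n\geq N}\sup_t(g_n(t)-1)<\varepsilon$, puis par densité du flot torique un $t$ arbitrairement grand avec $\cos(\nu_n t)$ proche de $1$ pour tout $n<N$ — ceci requiert la liberté rationnelle des $\nu_1,\dots,\nu_{N-1}$, qu'on discutera (sinon on travaille sur le sous-tore effectivement engendré, ce qui suffit). Pour le $\limsup$ : de même, on veut $t$ grand avec $\cos(\nu_1 t)$ proche de $-1$ (ce qui amène $g_1$ près de $(1+a)/(1-a)$), les autres termes ne pouvant que réduire $g=\sup$ en-dessous de $(1+a)/(1-a)$ puisque c'est déjà le sup global ; donc ici on n'a \emph{pas} besoin de contrôler les autres indices, juste $\nu_1 t\approx \pi \bmod 2\pi$ pour une infinité de $t\to\infty$, ce qui est immédiat.

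**Principal obstacle.** La vraie difficulté est le contrôle simultané des phases $\nu_n t$ pour établir $\liminf g = 1$ : il faut un argument d'équirépartition/récurrence sur le tore, et soit justifier l'indépendance rationnelle des $\nu_n = 2\sqrt{n^2-a^2}$ (pour une valeur générique de $a$, voire toute valeur — à vérifier via un argument de type Lindemann–Weierstrass ou en se ramenant à un sous-tore), soit, plus robustement, n'utiliser que la densité de l'orbite dans l'adhérence du sous-groupe à un paramètre engendré, combinée à la troncature en $N$ rendue licite par la décroissance uniforme $g_n-1 = O(1/n^2)$. C'est le seul endroit qui n'est pas un calcul direct ; tout le reste (monotonie de $g_n$ en la phase, monotonie en $n$ de la valeur maximale, évaluation explicite en $n=1$) est élémentaire.
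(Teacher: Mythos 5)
Votre d\'emonstration suit pour l'essentiel la m\^eme strat\'egie que celle du papier : la premi\`ere assertion est bien une cons\'equence directe du lemme de comparaison pr\'ec\'edent, le $\limsup$ d\'ecoule de la p\'eriodicit\'e de $g_1$ et du fait que $\sup_t g_n(t)=\frac{1+a/n}{1-a/n}$ est d\'ecroissant en $n$, et la $\liminf$ repose sur la troncature (pour $n\geq N$, $g_n-1$ est uniform\'ement petit) suivie d'un contr\^ole simultan\'e des phases $\nu_n t$ pour les indices $n<N$ restants. La diff\'erence substantielle porte sur ce dernier point, que vous identifiez \`a juste titre comme l'obstacle principal : vous proposez un argument d'\'equir\'epartition sur le tore, en signalant le probl\`eme de l'ind\'ependance rationnelle des $\nu_n=2\sqrt{n^2-a^2}$ et en indiquant en apart\'e la bonne \'echappatoire (seule compte la r\'ecurrence du flot lin\'eaire vers l'origine de l'adh\'erence de son orbite, laquelle est un sous-groupe ferm\'e du tore, de sorte qu'aucune ind\'ependance n'est requise). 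Le papier \'evite enti\`erement cette discussion au moyen du lemme \ref{jerem}, un \'enonc\'e d'approximation diophantienne simultan\'ee obtenu par le th\'eor\`eme de Minkowski, qui fournit inconditionnellement $t\geq 1$ et des entiers $k_n$ avec $|k_n T_n-t|<\delta$ ; en contrepartie, comme ce lemme ne donne que $t\geq 1$ et non $t$ arbitrairement grand, le papier doit ajouter une petite it\'eration (r\'eappliquer l'argument avec $\varepsilon_0/2$, o\`u $\varepsilon_0$ est le minimum de $g-1$ sur $[1/2,t+1]$) pour conclure sur la limite inf\'erieure, alors que l'argument de r\'ecurrence que vous esquissez donnerait directement des temps de retour arbitrairement grands. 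Votre approche est donc viable et correcte dans ses grandes lignes, mais son \'etape cl\'e n'est qu'esquiss\'ee : pour la rendre compl\`ete, il faudrait soit r\'ediger l'argument de r\'ecurrence sur le sous-tore effectivement engendr\'e, soit, comme le papier, le remplacer par le lemme de Minkowski.
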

\begin{proof}
 La première assertion a déjà été démontrée, et le résultat pour la limite supérieure découle directement de la périodicité de $g_1$. Pour la limite inf, considérons $\varepsilon > 0$, et soit $N\in\mathbb N$ tel que $\frac{1+\frac{a}{N}}{1-\frac{a}{N}}\leq 1+\varepsilon$. On a ainsi, pour tout $k\geq N$ et pour tout $t>0$, $g_k(t) \leq 1+\varepsilon$. On cherche ensuite  un temps où les fonctions restantes (en nombre fini) sont simultanément proches de leur minimum. Fixons $\delta > 0$ tel que pour tout $ n < N$ et tout $ k\in \mathbb Z$, on ait
\[|t - \frac{2k\pi}{\nu_n}| \leq \delta \Rightarrow g_n(t) \leq 1 + \varepsilon.\]
Le lemme \ref{jerem} de l'appendice nous fournit
 $t\geq 1$ et des entiers $k_1,\dots,k_{N-1} \in \mathbb{N}$ tels que $|\frac{2\pi}{\nu_n} k_n -t|<\delta$ pour tout $n<N$ ; on obtient que $g_n(t) \leq 1 + \varepsilon$ pour tout $n < N$, et donc pour tout $n \in \mathbb N$. Soit $\varepsilon_0$ le minimum sur $[1/2,t+1]$ de $g-1$ (fonction continue). Si $\varepsilon_0=0$ alors $g$ est périodique et son minimum est sa limite inférieure. Sinon on peut recommencer l'argument ci-dessus pour obtenir un temps $t_2\geq 1$ tel que pour tout $n \in \mathbb N$ on ait $g_n(t_2) \leq 1 + \varepsilon_0/2$, donc nécessairement $t_2 > t +1$ ; finalement en itérant le procédé on peut trouver des temps arbitrairement grand où $g$ est arbitrairement proche de 1, ce qui conclut.
\end{proof}

Ce lemme finit de démontrer le théorème \ref{essentiel}.

\section{Du discret au continu}\label{transition}

L'étude du volte-face a initialement été motivée par celle de la marche considérée dans \cite{diaconis} : $Y_n$ est une chaîne de Markov sur $\{-1,+1\}$ qui change de signe avec probabilité $(1-\alpha)/2$, et $X^N_{n+1} = X^N_n + Y_n$ dans $\mathbb Z/N\mathbb Z \fd \mathbb Z_N$, avec $N\in\NN\setminus\{0,1\}$. Ainsi pour son $n^{\hbox{\scriptsize ième}}$ saut la particule (dont la position est $X_n^N$) persiste dans le même sens qu'au coup précédent avec une probabilité supérieure à 1/2, c'est bien l'analogue discret du processus continu des sections précédentes. Notons que la chaîne $(X_n^N)_{n\in\NN}$ est markovienne d'ordre 2.
\par
Pour peu que $N$ soit impair la chaîne est irréductible apériodique et converge donc en loi vers son unique probabilité invariante $\mu_N$, qui est la mesure uniforme sur $\mathbb Z_N\times \{\pm1\}$. L'opérateur $M_{\alpha}f(x,y) = \mathbb E\lt(f(X_1,Y_1) | X_0 = x, Y_0=y\rt)$ associé agit sur les fonctions de $L^2(\mu_N)$ et la norme d'opérateur $\|M_{\alpha}^n - \mu_N \|_{L^2(\mu_N)} \underset{n \rightarrow +\infty}{\longrightarrow} 0$ (en voyant à nouveau $\mu_N$ comme l'opérateur $f\mapsto (\int f d\mu_N)\un$). On a même
\[\underset{n\rightarrow +\infty}{\lim} \frac{1}{n}\log \left(\|M_{\alpha}^n - \mu_N \|\right) = \log(\lambda_{\alpha})\]
où, en notant $\sigma(M_{\alpha})$ le spectre de $M_{\alpha}$, $\lambda_{\alpha} = \sup (|\sigma(M_{\alpha})\smallsetminus\{1\}|)$. Ce taux exponentiel de convergence $\log(\lambda_{\alpha})$ est de valeur absolue maximale (et donc de vitesse asymptotique  la meilleure) pour $\alpha_{opt} = \frac{1-\sin(\pi/N)}{1+\sin(\pi/N)}$, pour lequel $\lambda_{opt} = \sqrt{\alpha_{opt}}$ (cf. \cite{diaconis}). En comparaison, pour la marche isotrope ($\alpha = 0$), on a $\lambda_0 = \cos(\pi/N)$. On a donc amélioré la convergence en temps long car
\[\cos(\pi/N) = \sqrt{(1-\sin(\pi/N))(1+\sin(\pi/N))}\geq \sqrt{\frac{1-\sin(\pi/N)}{1+\sin(\pi/N)}}.\]
L'étude du volte-face a permis de mieux comprendre l'amorce de convergence en temps petit, et nous pouvons maintenant faire le lien avec la marche discrète. D'abord constatons que des calculs identiques aux précédents nous permettent de calculer la norme de $M$. Pour $k\in\lin1,N\rin$ on notera $e^{2ik\pi/N} = C_k + i S_k$, $\alpha_l  = \frac{1-|S_l|}{1+|S_l|}$, $C_0^2 = \frac{4\alpha}{(1+\alpha)^2}$ et $S_0^2 =  \lt(\frac{1-\alpha}{1+\alpha}\rt)^2$.

\begin{lem}\label{caldis}
Les plans $W_k = \{(x,y)\mapsto e^{2ik\pi x/N} g(y), g\in\mathbb C^{\pm1}\}$ sont stables par $M$. Notons $R_N(n,\alpha,k) \overset{\mathrm{def}}{=} \|M_\alpha^n - \mu\|_{W_k}^2$.
\begin{itemize}
\item si $\alpha < \alpha_k$ alors
\begin{eqnarray*}
R_N(n,\alpha,k) & = & \lambda_+^{2n} \times \left( 1 + \frac{2}{\omega^2\left(\frac{1+\gamma}{1-\gamma}\right) + \frac{S_0}{S_k}\sqrt{1+\omega^2\left(\frac{1+\gamma}{1-\gamma}\right)^2} -1}\right)
\end{eqnarray*}
avec $\lambda_\pm = \sqrt{\alpha}\left(\frac{|C_k|}{C_0} \pm \sqrt{\left(\frac{C_k}{C_0}\right)^2-1}\right)$, $\gamma = \left(\frac{\lambda_-}{\lambda_+}\right)^n$ et $\omega^2 = \left(\frac{S_0}{S_k}\right)^2-1$.

\item si $\alpha > \alpha_k$ alors 
\begin{eqnarray*}
 R_N(n,\alpha,k) & = & \alpha^n \times \left(1 + \frac{2}{\sqrt{2\frac{\left(\frac{S_k}{S_0}\right)^2-1}{1-\cos(2n\psi)}+1} -1}\right)
\end{eqnarray*}
où $\tan\psi = \sqrt{\left(\frac{C_0}{C_k}\right)^2-1}$.

\item si enfin $\alpha = \alpha_k$ alors
\begin{eqnarray*}
 R_N(n,\alpha,k) & = & \alpha^n \times \left(1 + \frac{2}{\sqrt{1+\frac{C_0^2}{S_0^2n^2}}-1}\right)
\end{eqnarray*}
\end{itemize}
\end{lem}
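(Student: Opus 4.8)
The plan is to reproduce, step for step, the analysis already carried out for the continuous-time model in the proof of Theorem~\ref{essentiel}: decompose $L^2(\mu_N)$ into pairwise orthogonal $M$-stable planes, reduce on each to the operator norm of a $2\times2$ matrix, and then compare.

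Conditioning on whether the velocity flips at the first step gives $Mf(x,y)=\tfrac{1+\alpha}{2}f(x+y,y)+\tfrac{1-\alpha}{2}f(x+y,-y)$, so for $f(x,y)=e^{2ik\pi x/N}g(y)$ one finds $Mf(x,y)=e^{2ik\pi x/N}N_kg(y)$ with
\[
N_kg(y)\ \df\ e^{2ik\pi y/N}\Bigl(\tfrac{1+\alpha}{2}\,g(y)+\tfrac{1-\alpha}{2}\,g(-y)\Bigr),\qquad y\in\{\pm1\}.
\]
Thus each $W_k$ is $M$-stable; orthogonality and totality follow, as in Lemma~\ref{plans}, from those of the characters of $\ZZ_N$, and since the inner product induced on $W_k$ by $L^2(\mu_N)$ is $\tfrac12$ times the standard one on $\CC^{\{\pm1\}}$ we have $R_N(n,\alpha,k)=\|N_k^n\|^2$ for $k\neq0$. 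I would then record that $N_k$ has trace $(1+\alpha)C_k$ and determinant $\bigl((1+\alpha)^2-(1-\alpha)^2\bigr)/4=\alpha$, so its eigenvalues solve $\lambda^2-(1+\alpha)C_k\lambda+\alpha=0$; using $C_0^2=4\alpha/(1+\alpha)^2$, equivalently $\sqrt\alpha/C_0=(1+\alpha)/2$, this yields $\lambda_\pm=\sqrt\alpha\bigl(|C_k|/C_0\pm\sqrt{(C_k/C_0)^2-1}\bigr)$. The discriminant is $\geq0$ iff $C_k^2\geq C_0^2$, which, with $S_0=(1-\alpha)/(1+\alpha)$, is equivalent to $S_0\geq|S_k|$, i.e.\ to $\alpha\leq\alpha_k$; this gives the trichotomy of the statement.

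Next I would pin down the geometry of the eigenvectors inside $\CC^{\{\pm1\}}$. Normalising an eigenvector for $\lambda$ by $g(1)=1$ forces $g(-1)=\bigl(2\lambda e^{-2ik\pi/N}-(1+\alpha)\bigr)/(1-\alpha)$, and the characteristic relation $\lambda^2=(1+\alpha)C_k\lambda-\alpha$ gives $|g(-1)|=1$, hence $\|g\|^2=2$. Expanding $\langle g_+,g_-\rangle$ with those same identities then yields, for the two unit eigenvectors, $|\langle e_1,e_2\rangle|=|S_k|/S_0$ when $\alpha<\alpha_k$ and $|\langle e_1,e_2\rangle|=S_0/|S_k|$ when $\alpha>\alpha_k$. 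In the critical case $\alpha=\alpha_k$ one uses instead a Jordan basis for the double eigenvalue $\sqrt\alpha$, normalised as in the continuous-time critical case, so that $N_k^n=\alpha^{n/2}\bigl(\Id+c\,n\,\cN\bigr)$ with $\cN$ a fixed rank-one nilpotent and $c$ a constant, the two basis vectors having a fixed inner product; alternatively this case follows from the other two by continuity in $\alpha$. After multiplying one basis vector by a unit scalar one may assume all the relevant inner products are nonnegative reals.

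From here the argument is essentially verbatim the one in the continuous-time lemmas, under the dictionary $K_n\leftrightarrow N_k$, $e^{tK_n}\leftrightarrow N_k^n$, $e^{(\lambda_2-\lambda_1)t}\leftrightarrow\gamma=(\lambda_-/\lambda_+)^n$, $n/a\leftrightarrow S_k/S_0$, and $\nu_nt\leftrightarrow 2n\psi$ with $\cos^2\psi=C_k^2/C_0^2$ (so that $\tan^2\psi=(C_0/C_k)^2-1$). One writes a general vector of $W_k$ as $u=re^{i\theta}e_1+e_2$ in the real case, $u=e_1+re^{i\theta}e_2$ in the complex case, and $u=(x+iy)e_1+e_2$ in the critical case. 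In the real and critical cases the ratio $\|N_k^nu\|^2/\|u\|^2$ is monotone in the relevant angular variable, so the extremum occurs at a real coefficient, leaving a one-variable optimisation (over $r$, over $x$) handled by Lemma~\ref{optf}. In the complex case Lemma~\ref{optf} first forces $r=\pm1$, reducing the problem to $\sup_\theta\frac{c+\cos(\theta-2n\psi)}{c+\cos\theta}$ with $c=S_k/S_0>1$, which Lemma~\ref{optg} evaluates. Substituting the eigenvalue moduli ($|\lambda_\pm|$ in the real case, $\sqrt\alpha$ otherwise) and simplifying then produces the three displayed formulas. I do not expect any new idea to be needed; the only genuinely delicate point is the bookkeeping in the eigenvector step — obtaining $\langle e_1,e_2\rangle$ with its correct phase, fixing the normalisation in the critical case, and checking the dictionary above — after which Lemmas~\ref{optf} and~\ref{optg} apply without change.
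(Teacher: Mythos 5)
Your proposal is correct and is exactly what the paper intends: the paper's own proof of Lemma \ref{caldis} is the single sentence that the approach and computations are almost identical to the continuous case, and you have simply carried that programme out — identifying the operator $N_k$ on each stable plane $W_k$, computing its trace $(1+\alpha)C_k$ and determinant $\alpha$ to get $\lambda_\pm$ and the trichotomy $\alpha\lessgtr\alpha_k$, pinning down $|\langle e_1,e_2\rangle|$, and then reusing Lemmas \ref{optf} and \ref{optg} under the dictionary you describe. The details you supply (in particular $|\langle e_1,e_2\rangle|=|S_k|/S_0$ in the real case and the reduction to $\sup_\theta(c+\cos(\theta-2n\psi))/(c+\cos\theta)$ with $c=|S_k|/S_0$ in the complex case) check out, so no gap.
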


\begin{proof}
La démarche et les calculs sont quasiment les mêmes que dans le cas continu et n'amènent aucune difficulté nouvelle.
\end{proof}

Lorsqu'on veut passer du modèle discret au continu, plutôt que $X_n^N \in \mathbb Z_N$ il vaut mieux regarder $U^N_t = \frac{2\pi}{N}X^N_n \in \mathbb T$ si $t = n \frac{2\pi}{N}$ que l'on prolonge de façon affine à $t\geq 0$ et $V^N_t = Y_n$ si $t\in\frac{2\pi}{N}[n,n+1[$. Si la probabilité de changer de sens $\frac{1-\alpha_N}{2}$ est de l'ordre de $\frac{1}{N}$, la convergence des temps entre deux changements vers une loi exponentielle donne la convergence en loi de $(U^N,V^N)$ vers le processus continu. Remarquons que pour $u = \frac{2\pi}{N} x$ on peut réécrire $e^{i\frac{2k\pi}{N}x} = e^{iku}$, l'espace $V_k$ correspond donc à $W_k$ :

\begin{lem}
 Pour tout $t>0$ et $k\in\mathbb Z$, si $\alpha^{(N)}\in[0,1]$ est tel que $\frac{N}{2\pi} \times \frac{1-\alpha^{(N)}}{2} \underset{N\rightarrow +\infty}{\longrightarrow} a$ alors
\[R_N\lt(\lt\lfloor \frac{Nt}{2\pi}\rt\rfloor,\alpha^{(N)},k\rt) \underset{N\rightarrow +\infty}{\longrightarrow} R(t,a,k).\]
\end{lem}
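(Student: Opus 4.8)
The strategy is to pass to the limit in the explicit formulas of Lemma~\ref{caldis}, case by case according to the sign of $\alpha^{(N)}-\alpha_k$, and match each limit with the corresponding case of the continuous formulas. First I would set $n_N \df \lfloor Nt/2\pi\rfloor$, so that $n_N \sim Nt/2\pi$ and $2\pi n_N/N \to t$. The hypothesis $\frac{N}{2\pi}\cdot\frac{1-\alpha^{(N)}}{2}\to a$ means $1-\alpha^{(N)} \sim 4\pi a/N$, hence $\alpha^{(N)}\to 1$ and, more precisely, $\log\alpha^{(N)} \sim -(1-\alpha^{(N)}) \sim -4\pi a/N$. Consequently $(\alpha^{(N)})^{n_N} = \exp(n_N\log\alpha^{(N)}) \to \exp\!\big(\frac{t}{2\pi}\cdot(-4\pi a)/2\big)$; one must be careful with the factors of $2$ coming from the definitions of $S_0^2$ and $C_0^2$, but after bookkeeping this gives $(\alpha^{(N)})^{n_N}\to e^{-2at}$, matching the prefactors $e^{-2at}$ in the $a\lessgtr n$ formulas of the continuous case.

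Next I would track the geometric quantities. With $k$ fixed, $e^{2ik\pi/N} = C_k + iS_k$ gives $S_k = \sin(2k\pi/N) \sim 2k\pi/N \to 0$ and $C_k \to 1$; also $S_0^2 = \big(\frac{1-\alpha^{(N)}}{1+\alpha^{(N)}}\big)^2 \sim (2\pi a/N)^2$, so $S_0/S_k \to a/k$ (taking $k>0$; for $k<0$ use conjugation, and $k=0$ is immediate). Similarly $C_0^2 = \frac{4\alpha^{(N)}}{(1+\alpha^{(N)})^2} = 1 - S_0^2 \to 1$, so $|C_k|/C_0 \to 1$. Thus the discriminant-type quantity $\big(\frac{C_k}{C_0}\big)^2 - 1$ has the same sign, for $N$ large, as $\big(\frac{C_k}{C_0}\big)^2-1 \approx C_k^2 - 1 + S_0^2 = -S_k^2 + S_0^2 = S_0^2 - S_k^2$, whose sign is that of $a/k - 1$, i.e.\ of $a - k$. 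This is exactly the trichotomy $a\lessgtr k$ of the continuous lemmas, so for fixed $a\neq |k|$ the discrete case $\alpha^{(N)} \lessgtr \alpha_k$ eventually agrees with the continuous case $a\lessgtr |k|$, and one applies the matching formula.

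In the hyperbolic case $a>|k|$: $\omega^2 = (S_0/S_k)^2 - 1 \to (a/k)^2-1$, which matches the continuous $\omega = \sqrt{(a/n)^2-1}$ (with $n=k$). For $\gamma = (\lambda_-/\lambda_+)^{n_N}$, write $\lambda_\pm = \sqrt{\alpha^{(N)}}\big(\frac{|C_k|}{C_0}\pm\sqrt{(C_k/C_0)^2-1}\big)$; then $\lambda_-/\lambda_+ = \big(\tfrac{|C_k|}{C_0}-\sqrt{\cdot}\big)/\big(\tfrac{|C_k|}{C_0}+\sqrt{\cdot}\big) = 1 - 2\sqrt{(C_k/C_0)^2-1} + O((C_k/C_0)^2-1)$, and since $(C_k/C_0)^2-1 \sim (S_0^2-S_k^2) \sim \frac{4\pi^2}{N^2}(a^2-k^2)$ we get $\sqrt{(C_k/C_0)^2-1}\sim \frac{2\pi}{N}\sqrt{a^2-k^2}$, hence $\log(\lambda_-/\lambda_+) \sim -\frac{4\pi}{N}\sqrt{a^2-k^2}$ and $\gamma = \exp(n_N\log(\lambda_-/\lambda_+)) \to \exp(-2\sqrt{a^2-k^2}\,t)$, matching the continuous $\gamma = e^{-2\sqrt{a^2-n^2}t}$. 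Also $S_0/S_k \to a/k$, matching the factor $\tfrac{a}{n}$. Plugging these convergences into the algebraic expression for $R_N$, and using continuity of the map $(\text{prefactor},\omega,\gamma,\tfrac{a}{n})\mapsto R$, yields $R_N(n_N,\alpha^{(N)},k)\to R(t,a,k)$.

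In the oscillatory case $a<|k|$: the relevant quantity is $n_N\psi$ where $\tan\psi = \sqrt{(C_0/C_k)^2-1}$. Here $(C_0/C_k)^2 - 1 \sim (S_k^2 - S_0^2) \sim \frac{4\pi^2}{N^2}(k^2-a^2) > 0$, so $\psi \sim \tan\psi \sim \frac{2\pi}{N}\sqrt{k^2-a^2}$, whence $2n_N\psi \to 2t\cdot 2\sqrt{k^2-a^2} = \nu_k t$ (with $\nu_k = 2\sqrt{k^2-a^2}$, the continuous notation with $n=k$). Moreover $(S_k/S_0)^2 - 1 \to (k/a)^2 - 1$, and one checks that the combination $2\frac{(S_k/S_0)^2-1}{1-\cos(2n_N\psi)}$ converges to $\frac{\nu_k^2}{a^2}\cdot\frac{1}{2(1-\cos(\nu_k t))}$ — here I would verify the constant: $2((k/a)^2-1) = \frac{2(k^2-a^2)}{a^2} = \frac{\nu_k^2}{2a^2}$, which divided by $(1-\cos\nu_k t)$ gives $\frac{\nu_k^2}{a^2}\cdot\frac{1}{2(1-\cos\nu_k t)}$, as required. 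Again continuity of the explicit formula gives the claimed convergence; the endpoints of the period where $1-\cos(\nu_k t)=0$ cause no trouble since then both sides diverge to $+\infty$ and $R=0 \cdot \infty$ is interpreted by the formula giving $R\to$ the same limiting value (indeed $g_k\to+\infty$ forces the whole bracket $\to 1$, and the factor $e^{-2at}$... one should note $R$ stays bounded). The marginal case $a=|k|$ follows either by the same computation with the Jordan-block formula $\big(1+\frac{2}{\sqrt{1+C_0^2/(S_0^2 n^2)}-1}\big)$, noting $C_0^2/(S_0^2 n_N^2) \sim 1/((2\pi a/N)^2\cdot(Nt/2\pi)^2) = 1/(a^2 t^2)$ so the bracket $\to 1 + \frac{2}{\sqrt{1+1/(a^2t^2)}-1}$, matching; or, as the authors remark after that lemma, by continuity in $n$ from the two sides.

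**Main obstacle.** The conceptual content is light — it is ``pass to the limit in a continuous function of finitely many parameters'' — but the actual difficulty is entirely in the \emph{bookkeeping of constants}: the definitions $S_0^2 = \big(\frac{1-\alpha}{1+\alpha}\big)^2$ and $C_0^2 = \frac{4\alpha}{(1+\alpha)^2}$ hide a factor of $2$ relative to the naive guess $S_0 \approx (1-\alpha)$, and the time rescaling $t = n\cdot 2\pi/N$ together with the hypothesis's normalization $\frac{N}{2\pi}\cdot\frac{1-\alpha^{(N)}}{2}\to a$ has two more factors of $2\pi$ and $2$ that must cancel correctly for the prefactor to come out $e^{-2at}$ and for $\gamma$, $\nu_k$ to match. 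I expect to spend essentially all the effort confirming that each of these constants lands in the right place; once the limits $\omega\to\sqrt{(a/k)^2-1}$, $\gamma\to e^{-2\sqrt{a^2-k^2}t}$, $2n_N\psi\to\nu_k t$, and prefactor $\to e^{-2at}$ are established, the conclusion is immediate from continuity of the elementary formulas in Lemma~\ref{caldis} and their continuous counterparts, together with the already-proved fact (Lemma~\ref{plans}) identifying $W_k$ with $V_k$ under $u = \frac{2\pi}{N}x$. A secondary point requiring a word of care is uniformity: the convergence is for \emph{fixed} $t$ and \emph{fixed} $k$, which is all that is claimed, so no uniform estimate is needed and I would not attempt one.
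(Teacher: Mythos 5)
Your argument is correct and is exactly the route the paper takes: its proof is the single sentence « On le vérifie sans difficulté particulière sur les expressions analytiques », i.e.\ pass to the limit term by term in the formulas of Lemme~\ref{caldis}, which you carry out in detail (and your bookkeeping of the constants — $S_0/S_k\to a/k$, $\gamma\to e^{-2\sqrt{a^2-k^2}\,t}$, $2n_N\psi\to\nu_k t$, prefactor $\to e^{-2at}$ resp.\ $\lambda_+^{2n_N}\to e^{-2(a-\sqrt{a^2-k^2})t}$ — checks out). The only blemish is the intermediate expression $\exp\bigl(\tfrac{t}{2\pi}\cdot(-4\pi a)/2\bigr)$, which evaluates to $e^{-at}$ rather than the correct $e^{-2at}$ that you state immediately afterwards; this is a transcription slip, not a gap.
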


\begin{proof}
 On le vérifie sans difficulté particulière sur les expressions analytique données dans le lemme \ref{caldis} et la partie \ref{calcul}.
\end{proof}

Cependant, contrairement au cas continu, dans la marche discrète la plus grande valeur propre (associée au $|\cos(\frac{2k\pi}{N})|$ maximal) ne correspond pas à $k=1$ mais à $k = \pm\lt\lfloor\frac{N}{2}\rt\rfloor$. Pour avoir la convergence des normes globales d'opérateurs il faut ignorer les deux plans $W_{\pm\lt\lfloor\frac{N}{2}\rt\rfloor}$. En un sens le caractère fini des positions prises par la particule entraîne l'existence d'observables qui convergent mal, ce qui disparaît à la limite des processus, mais pas dans le passage à la limite des normes.

\begin{lem}
  Pour tout $t>0$ et $k\in\mathbb Z$, si $\alpha^{(N)}\in[0,1]$ est tel que $\frac{N}{2\pi} \times \frac{1-\alpha^{(N)}}{2} \underset{N\rightarrow +\infty}{\longrightarrow} a$ alors
\[R_N\lt(\lt\lfloor \frac{Nt}{2\pi}\rt\rfloor,\alpha^{(N)},\lt\lfloor\frac{N}{2}\rt\rfloor - k\rt) \underset{N\rightarrow +\infty}{\longrightarrow} R\lt(t,a,k+\frac{1}{2}\rt).\]
\end{lem}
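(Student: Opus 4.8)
Il s'agit d'un énoncé de convergence analogue au lemme précédent, à ceci près que l'on regarde les plans $W_{\lfloor N/2\rfloor - k}$ plutôt que $W_k$ ; le point crucial est que, sur ces plans, le paramètre effectif qui joue le rôle de $k$ n'est pas entier mais demi-entier, d'où le $k+\frac12$ dans la limite. Le plan est de partir des expressions analytiques explicites de $R_N(n,\alpha,m)$ fournies par le lemme \ref{caldis} (avec $m = \lfloor N/2\rfloor - k$), d'y reporter $n = \lfloor Nt/(2\pi)\rfloor$ et $\alpha = \alpha^{(N)}$, puis de faire le développement asymptotique en $N\to+\infty$ de chaque quantité apparaissant ($C_m$, $S_m$, $\lambda_\pm$, $\gamma$, $\omega$, $\psi$, $C_0$, $S_0$), exactement comme dans la preuve du lemme de transition $R_N(\lfloor Nt/2\pi\rfloor,\alpha^{(N)},k)\to R(t,a,k)$, et de reconnaître dans la limite l'expression de $R(t,a,k+\tfrac12)$ donnée en partie \ref{calcul}.

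\textbf{Le calcul des équivalents.} Posons $m = \lfloor N/2\rfloor - k$. Alors $e^{2i m\pi/N} = e^{i\pi - 2ik\pi/N - i\pi\{N/2\}\cdot 2/N}$ (le terme de parité de $N$ étant un $O(1/N)$), donc $C_m = \cos(2m\pi/N) = -\cos(2k\pi/N) + o(1) \sim -1 + \frac{1}{2}\big(\frac{(2k+1)\pi}{N}\big)^2$ plus précisément, et $S_m = \sin(2m\pi/N) \sim \frac{(2k+1)\pi}{N} = \frac{2\pi}{N}\big(k+\tfrac12\big)$. C'est là que le demi-entier apparaît : $S_m$ se comporte comme $\frac{2\pi}{N}(k+\frac12)$ là où, dans le lemme précédent, $S_k$ se comportait comme $\frac{2\pi}{N}k$. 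Par ailleurs, sous l'hypothèse $\frac{N}{2\pi}\cdot\frac{1-\alpha^{(N)}}{2}\to a$, on a $1-\alpha^{(N)}\sim \frac{4\pi a}{N}$, d'où $S_0 = \frac{1-\alpha^{(N)}}{1+\alpha^{(N)}}\sim \frac{2\pi a}{N}$ et $C_0^2 = \frac{4\alpha^{(N)}}{(1+\alpha^{(N)})^2} = 1 - S_0^2 \to 1$. Les ratios pertinents convergent alors : $\frac{S_0}{S_m}\to \frac{a}{k+1/2}$, $\frac{|C_m|}{C_0}\to 1$ avec $\big(\frac{C_m}{C_0}\big)^2 - 1 \sim \frac{S_0^2 - S_m^2}{C_0^2}$, et le comportement de $\lambda_\pm^{2n}$ (resp. de $\alpha^n$, resp. de $\cos(2n\psi)$) suit par les mêmes manipulations que dans le cas $R(t,a,k)$, en remplaçant systématiquement $n^2 - a^2$ par $(k+\tfrac12)^2 - a^2$ via $S_m/S_0$. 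Selon le signe de $a - (k+\frac12)$ on tombe dans l'une des trois branches du lemme \ref{caldis} et dans la branche correspondante de l'expression de $R(t,a,k+\frac12)$.

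\textbf{Rédaction.} Comme le résultat ne demande que la vérification d'un passage à la limite sur des formules closes déjà établies, une rédaction du type de celle du lemme précédent suffit : on peut se contenter d'indiquer que l'on reprend les asymptotiques de la preuve du lemme de transition, en signalant l'unique modification, à savoir le développement $S_{\lfloor N/2\rfloor - k} \sim \frac{2\pi}{N}(k+\tfrac12)$ (au lieu de $S_k\sim\frac{2\pi}{N}k$), qui fait apparaître le paramètre demi-entier. La preuve se rédige donc en une phrase du type : \og Les mêmes développements asymptotiques que dans la preuve du lemme précédent s'appliquent, à ceci près que $S_{\lfloor N/2\rfloor-k}=\sin\!\big(\pi - \tfrac{(2k+1)\pi}{N}+O(1/N)\big)\sim\tfrac{2\pi}{N}\big(k+\tfrac12\big)$, ce qui conduit à $R(t,a,k+\tfrac12)$ au lieu de $R(t,a,k)$.\fg

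\textbf{Principal obstacle.} Il n'y a pas de difficulté conceptuelle ; le seul point demandant un peu de soin est de gérer proprement la dépendance en la parité de $N$ dans $\lfloor N/2\rfloor$ (le décalage $\{N/2\}$ produit un terme $O(1/N)$ dans l'argument du sinus, donc un terme sous-dominant qui ne modifie pas l'équivalent), ainsi que de vérifier que la convergence des ratios $S_0/S_m$, $C_m/C_0$, etc., est bien uniforme en $n=\lfloor Nt/2\pi\rfloor$ sur tout intervalle de temps borné, de sorte que la limite de $R_N$ soit bien ponctuelle en $t$. Une fois ces points vérifiés, la reconnaissance de $R(t,a,k+\tfrac12)$ est automatique par comparaison terme à terme avec les formules de la partie \ref{calcul}.
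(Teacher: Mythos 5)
Votre démonstration est correcte et suit essentiellement la même démarche que celle du papier, qui se réduit elle aussi à la remarque que $\sin\big(\tfrac{2\pi}{N}(\lfloor N/2\rfloor-k)\big)=\sin\big(\tfrac{2\pi}{N}(k+\tfrac12)\big)$ (identité d'ailleurs exacte pour $N$ impair, cas auquel on s'est restreint, ce qui dispense de la discussion sur la parité) puis à reporter cette substitution dans les développements asymptotiques du lemme de transition précédent.
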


\begin{proof}
Les calculs sont les mêmes que précédemment ; le $1/2$ apparaît avec
\[\sin\lt(\frac{2\pi}{N}\lt(\lt\lfloor\frac{N}{2}\rt\rfloor-k\rt)\rt) = \sin\lt(\pi - \frac{2\pi}{N}\lt(\lt\lfloor\frac{N}{2}\rt\rfloor-k\rt)\rt) = \sin\lt(\frac{2\pi}{N}\lt(k+\frac12\rt)\rt)\]
\end{proof}

Le travail de comparaison des $R(t,a,n)$ englobait déjà les $n$ non-entiers, et en notant pour tout $t\geq 0$,
\bq
\fo u\in \ZZ_N,\,\fo v\in\{\pm1\},\qquad
P^N_t f(u,v) &\df& \mathbb E(f(X^N_n,Y^N_n)|X^N_0 = u, Y^N_0 = v)\eq
avec $n=\lfloor Nt/(2\pi)\rfloor$,
on obtient \emph{in fine}

\rule{\linewidth}{.5pt}

\begin{prop}
Si $\frac{N}{2\pi} \times \frac{1-\alpha^{(N)}}{2} \underset{N\rightarrow +\infty}{\longrightarrow} a\geq\frac12$ alors
\[||P^N_t - \mu_N|| \underset{N\rightarrow +\infty}{\longrightarrow} R\lt(t,a,\frac12\rt).\]
D'autre part si l'on note $\mathcal V_N \df \Vect( W_{\lfloor{N}/{2}\rfloor},W_{-\lfloor{N}/{2}\rfloor})^\perp$ et si $a\geq1$ alors
\[||P^N_t - \mu_N||_{\mathcal V_N} \underset{N\rightarrow +\infty}{\longrightarrow} ||P^a_t - \mu||. \]
Les convergences sont uniformes en $t$.
\end{prop}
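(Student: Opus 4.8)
Le point de départ est la décomposition orthogonale $L^2(\mu_N) = \bigoplus_{k} W_k$ établie au lemme~\ref{caldis}, dont l'analogue continu est la décomposition $L^2(\mu) = \bigoplus_{n\in\ZZ} V_n$ du lemme~\ref{plans}. Comme chaque $W_k$ est stable par $M_\alpha$ (et contenu dans $\ker\mu_N$ dès que $k\neq 0$, tandis que $W_0$ donne une contribution en $\alpha^{2n}$ que l'on vérifie sous-dominante comme dans le cas continu), on a
\[
\|P^N_t-\mu_N\|^2 \;=\; \max_{k}\, R_N\!\lt(\lfloor Nt/2\pi\rfloor,\alpha^{(N)},k\rt),
\]
et de même $\|P^N_t-\mu_N\|^2_{\cV_N}$ est ce même maximum restreint à $k\notin\{\pm\lfloor N/2\rfloor\}$. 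L'idée est alors de montrer que, quand $N\to\infty$, ce maximum discret converge vers le supremum continu correspondant ; les deux lemmes de convergence ponctuelle déjà démontrés identifient les limites candidates : $R(t,a,k)$ pour les indices $k$ d'ordre $O(1)$, et $R(t,a,k+\tfrac12)$ pour les indices proches de $\lfloor N/2\rfloor$.

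Première partie (convergence de $\|P^N_t-\mu_N\|$). La borne inférieure $\liminf_N \|P^N_t-\mu_N\|^2 \geq R(t,a,\tfrac12)$ est immédiate en prenant $k=\lfloor N/2\rfloor$ dans le max et en appliquant le deuxième lemme de convergence (avec $k=0$). Pour la borne supérieure il faut contrôler \emph{tous} les indices uniformément en $N$. On scinde les $k\in\lin 1,N\rin$ en deux régimes. Pour $k$ « petit » ($k\leq N/4$, disons, de sorte que $\tfrac{2k\pi}{N}\leq\pi/2$ et $|S_k|$ croît avec $k$), on dispose du travail de comparaison de la partie~\ref{calcul} : les expressions de $R_N$ s'étendent continûment en $k$ réel et, via les lemmes de l'appendice (monotonie type \ref{decroi} en régime $\alpha<\alpha_k$, et majoration par le cas critique pour $\alpha>\alpha_k$), le maximum sur ce régime est asymptotiquement majoré par $R(t,a,1)$, donc par $R(t,a,\tfrac12)$ puisque $R(t,a,\cdot)$ est, pour $n\geq a$, décroissante en $n$ sur l'intervalle pertinent — c'est exactement ce que fournit la comparaison $g_k\geq g_n$ pour $k<n$ déjà établie, combinée à $R(t,a,\tfrac12)\geq R(t,a,1)$ quand $a\geq \tfrac12$. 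Pour $k$ « grand » (proche de $\lfloor N/2\rfloor$, où $|C_k|$ est petit), on pose $j = \lfloor N/2\rfloor - k$ et on réutilise $\sin(\tfrac{2\pi}{N}(\lfloor N/2\rfloor-k)) = \sin(\tfrac{2\pi}{N}(k+\tfrac12))$ pour ramener $R_N(\cdot,\cdot,k)$ à une expression du même type que $R(t,a,j+\tfrac12)$ ; le même argument de comparaison (les lemmes de l'appendice appliqués avec demi-entiers) montre que le maximum sur ce régime tend vers $\sup_{j\geq 0} R(t,a,j+\tfrac12) = R(t,a,\tfrac12)$ dès que $a\geq\tfrac12$ (condition qui garantit que l'indice $\tfrac12$ est déjà dans la zone $n\geq a$ où $R(t,a,\cdot)$ décroît). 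Enfin, le régime intermédiaire ($N/4\lesssim k\lesssim N/2$, correspondant à des indices continus tendant vers l'infini) se traite en observant que $R(t,a,n)\leq e^{-2at}\cdot\tfrac{1+a/n}{1-a/n}\to e^{-2at}<1$ uniformément, donc y est strictement sous-dominant. Le cas $n=0$ et le cas $n$ non-entier étant couverts par la remarque de la partie~\ref{calcul} sur la continuité de $n\mapsto R(t,a,n)$.

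Deuxième partie (convergence de $\|P^N_t-\mu_N\|_{\cV_N}$ vers $\|P^a_t-\mu\|$ quand $a\geq 1$). On retire les deux indices $\pm\lfloor N/2\rfloor$. Par le premier lemme de convergence, pour chaque $k$ fixé $R_N(\lfloor Nt/2\pi\rfloor,\alpha^{(N)},k)\to R(t,a,k)$, et $\|P^a_t-\mu\|^2 = \sup_{n\geq 1}R(t,a,n) = R(t,a,1)$ d'après la partie~\ref{calcul} (c'est ici qu'intervient $a\geq 1$). La borne inférieure est donc claire ($k=1$). Pour la borne supérieure on refait le découpage ci-dessus \emph{en ayant ôté les indices extrêmes} : dans le régime « $k$ grand » on a maintenant $j\geq 1$, et le supremum tend vers $\sup_{j\geq 1}R(t,a,j+\tfrac12)$, qui est $\leq R(t,a,\tfrac32)\leq R(t,a,1)$ grâce à la décroissance de $R(t,a,\cdot)$ sur $[a,\infty[\subset[1,\infty[$ ; dans le régime « $k$ petit » on retrouve $\sup_{k\geq 1}R(t,a,k)=R(t,a,1)$ ; le régime intermédiaire est toujours négligeable. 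L'uniformité en $t$ sur tout compact (ou même sur $\RR_+$) résulte de ce que toutes les majorations précédentes sont en fait uniformes : les dérivées en $t$ des $R_N$ et de $R$ sont bornées uniformément en $N$ et en l'indice sur tout compact de temps (les $R_N$ sont équilipschitziennes car leurs expressions analytiques le sont, les oscillations de période $\to 0$ n'empirant pas la borne Lipschitz de $|R_N|\leq C$), ce qui transforme la convergence ponctuelle en convergence uniforme par un argument d'Ascoli–Arzelà.

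Le principal obstacle est le contrôle \emph{uniforme en $N$} du maximum sur tous les indices $k$, et en particulier la jonction propre entre les trois régimes : il faut s'assurer que les constantes dans les lemmes de monotonie de l'appendice (\ref{decroi}, \ref{optf}, \ref{optg}) ne dégénèrent pas quand on fait varier simultanément $N\to\infty$ et $k$ dans une plage de taille $\Theta(N)$, et que le passage discret $\to$ continu des bornes de comparaison (du type « $g_k\geq g_n$ pour $k<n$ sur $[0,\pi/\nu_k]$ ») survit à la discrétisation du temps $t\mapsto\lfloor Nt/2\pi\rfloor\,\tfrac{2\pi}{N}$. Une fois admis que ces estimations sont robustes — ce qui est plausible car les expressions de $R_N$ convergent vers celles de $R$ \emph{avec leurs dérivées} —, le reste n'est que réorganisation des comparaisons déjà faites dans la partie~\ref{calcul}.
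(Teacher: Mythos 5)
Votre traitement de la convergence ponctuelle suit pour l'essentiel la d\'emarche du papier : les deux lemmes de convergence des $R_N$ vers $R(t,a,k)$ et $R(t,a,k+\frac12)$, combin\'es au travail de comparaison de la partie \ref{calcul} (qui englobe d\'ej\`a les indices non entiers), identifient le supremum limite comme $R(t,a,\frac12)$, resp. $R(t,a,1)=\lVe P^a_t-\mu\rVe^2$ une fois \^ot\'es les plans $W_{\pm\lfloor N/2\rfloor}$. Le papier se contente d'affirmer que \og tout est d\'ej\`a d\'emontr\'e \fg\ hormis l'uniformit\'e en $t$ ; votre d\'ecoupage en trois r\'egimes d'indices et le contr\^ole du r\'egime interm\'ediaire par la borne $\sup_t g_n\leq\frac{n+a}{n-a}\to 1$ explicitent utilement le passage de la convergence \`a $k$ fix\'e au maximum sur tous les $k$, point que le papier laisse implicite et qui m\'erite effectivement d'\^etre contr\^ol\'e uniform\'ement en $N$.

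En revanche, votre argument pour l'uniformit\'e en $t$ comporte une erreur. Les fonctions $t\mapsto\lVe P^N_t-\mu_N\rVe$ ne sont pas \'equilipschitziennes : \`a cause de la discr\'etisation $n=\lfloor Nt/(2\pi)\rfloor$, ce sont des fonctions en escalier de $t$, discontinues en chaque $t=2\pi n/N$, et le crit\`ere d'Ascoli--Arzel\`a ne s'applique donc pas tel quel. Les expressions analytiques $R_N(n,\alpha,k)$ sont certes r\'eguli\`eres en la variable continue $n$, mais ce n'est pas la fonction de $t$ que l'on consid\`ere, et des sauts (m\^eme de taille tendant vers $0$) ne donnent pas l'\'equicontinuit\'e requise. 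L'argument correct --- et c'est celui du papier --- est le th\'eor\`eme de Dini dans sa version \og monotone en la variable \fg\ : chaque fonction $t\mapsto\lVe P^N_t-\mu_N\rVe$ est d\'ecroissante (cons\'equence de l'in\'egalit\'e de Jensen, $\lVe M_\alpha\rVe\leq 1$), la limite ponctuelle est continue et d\'ecroissante, donc la convergence ponctuelle d'une suite de fonctions monotones vers une limite continue est automatiquement uniforme. La monotonie remplace ici l'\'equicontinuit\'e qui fait d\'efaut ; c'est le seul point que le papier juge n\'ecessaire de d\'emontrer, et c'est pr\'ecis\'ement celui o\`u votre argument ne tient pas.
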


\rule{\linewidth}{.5pt}

\begin{proof}
 Tout est déjà démontré sauf le caractère uniforme en $t$ ; les fonctions en présence étant toutes décroissantes et les limites continues, il découle du théorème de Dini.
\end{proof}

\begin{rmq}
 Notons que, grosso modo, les choses se passent bien également pour $a<\frac12$ dans le  premier cas et pour $a<1$ dans le second mais avec de très légères subtilités : par exemple, dans le deuxième cas et pour reprendre les notations de la partie \ref{calcul}, la fonction $g(t)$ limite n'est pas le supremum des $g_n(t)$ pour $n$ entier mais pour $n$ entier ou demi-entier, ce qui peut éventuellement légèrement changer la valeur exacte de la norme lors d'un \og creux \fg\ de $R(t,a,1)$.
\end{rmq}

Un constat particulier sur ce défaut de convergence du discret vers le continu est que si l'on prend pour tout $N$ la probabilité optimale (au sens du trou spectral maximal) de changer de sens dans la marche persistante, alors on converge vers un taux $1/2$ de saut pour $Y_t$, qui n'est pas optimal pour le processus continu, et qui donne le même taux exponentiel $1/2$ de convergence que le mouvement brownien sur le tore.

Cependant le phénomène de décroissance initiale en $t^3$, lui, n'est pas affecté par cette subtilité ; c'est normal car son origine n'est pas dans la prise du supremum des normes des restrictions mais, déjà localement, sur chacun des plans $W_k$. Une interprétation possible est que prendre, au lieu d'un processus réversible, l'intégrale d'un processus réversible retarde initialement l'effet de mélange du hasard ; ou bien que la particule commence par se déplacer de façon déterministe et brouille donc moins bien les pistes qu'une diffusion au moins initialement.

Si pour $N$ grand, on compare (en oubliant le défaut de convergence et les fonctions de $\Vect( W_{\lfloor{N}/{2}\rfloor},W_{-\lfloor{N}/{2}\rfloor})$) la marche simple et la marche persistante pour $a=1$ à la limite, pour un nombre $n$ d'itérations fixé, l'écart $L^2$ à l'équilibre de la marche réversible est environ $1 - \frac{t}{2}$ avec $t = n \left(\frac{2\pi}{N}\right)^2$ (si cette quantité est petite) et celle de la marche persistante est $1 - \frac{t^3}{3}$ avec $t = n\frac{2\pi}{N}$ (si $n\ll N$), qui devient meilleure que la précédente pour $n \approx \sqrt{\frac{3}{4\pi}N}$ (qui assure aussi la validité des asymptotiques précédentes): c'est le nombre d'itérations à partir duquel la marche d'ordre 2 est plus proche de la mesure uniforme que la réversible.

\section{Du continu au mouvement brownien}\label{brownien}

Lorsque $a \rightarrow +\infty$, la vitesse du processus continu saute de plus en plus vite de $-1$ en $1$ ; à la limite, les vitesses en deux temps distincts devraient donc être décorrelées. Le processus devrait en conséquence être l'intégrale d'un bruit blanc, autrement dit un mouvement brownien. Avec la bonne renormalisation, c'est effectivement le cas :

\rule{\linewidth}{.5pt}

\begin{prop}
$X^a = (X_{ta})_{t>0}$ converge en loi vers un mouvement brownien standard sur $\mathbb T$ quand $a\rightarrow +\infty$.
\end{prop}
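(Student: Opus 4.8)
L'idée est d'établir la convergence en loi du processus renormalisé $X^a = (X_{ta})_{t \geq 0}$ vers un mouvement brownien standard sur $\mathbb{T}$ lorsque $a \to +\infty$, en passant par une identification de la covariance limite puis un argument de tension (ou directement via les semi-groupes). Je procéderais d'abord par un calcul explicite : comme $X_t = \int_0^t Y_s\,ds$ où $(Y_s)_{s\geq 0}$ est le processus de saut sur $\{-1,1\}$ changeant de signe au taux $a$, on sait que $\mathbb{E}(Y_s Y_u \mid Y_0 = y) = e^{-2a|s-u|}$ (indépendamment de la loi initiale si $Y_0$ est uniforme sur $\{\pm1\}$, sinon avec un terme de bord). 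On en déduit, pour $X_0 = 0$,
\[
\mathbb{E}\left(X_{ta} X_{ua}\right) = \int_0^{ta}\int_0^{ua} e^{-2a|s-r|}\,ds\,dr.
\]
En faisant le changement de variables $s = a\sigma$, $r = a\rho$, cette intégrale devient $a^2 \int_0^t\int_0^u e^{-2a^2|\sigma-\rho|}\,d\sigma\,d\rho$, et comme $a^2 e^{-2a^2|\cdot|}$ est une approximation de l'identité (à un facteur $\frac{1}{4}$ près : $\int_{\mathbb{R}} a^2 e^{-2a^2|v|}\,dv = 1$... en fait $= 1$ après vérification du coefficient), cette quantité tend vers $\frac{1}{4}\cdot 2\min(t,u)$ — modulo la constante de renormalisation exacte, qui est précisément ce qui justifie le choix $X^a = (X_{ta})$ plutôt qu'une autre échelle. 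Le premier travail est donc de fixer proprement cette constante : il se peut qu'on obtienne un brownien de variance $\frac{1}{2}$, auquel cas « standard » doit s'entendre au bon coefficient près, ou bien le facteur $a$ dans $X_{ta}$ est exactement calibré pour que la limite soit le brownien standard — un développement soigneux de l'intégrale double le dira.

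Ensuite, pour passer de la convergence des covariances à la convergence en loi dans l'espace des trajectoires continues $C([0,T],\mathbb{T})$, j'établirais la tension de la famille $(X^a)_{a>0}$. Comme $|Y_s| = 1$ partout, le processus $X$ est $1$-lipschitzien, donc $X_{ta}$ est $a$-lipschitzien en $t$, ce qui n'est pas directement borné ; il faut donc un critère de type Kolmogorov sur les moments des accroissements. On calcule $\mathbb{E}\left(|X_{ta} - X_{ua}|^4\right)$ : c'est une intégrale quadruple de covariances d'ordre $4$ de $Y$, qui se factorise grâce à la structure markovienne de $(Y_s)$ (les fonctions de corrélation d'un processus de saut sur deux états sont explicites). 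On s'attend à une majoration du type $\mathbb{E}(|X_{ta} - X_{ua}|^4) \leq C (t-u)^2$ uniformément en $a$ pour $a$ grand, ce qui donne la tension par le critère de Kolmogorov–Chentsov. C'est l'étape qui demande le plus de soin calculatoire, mais elle n'est pas conceptuellement difficile : il s'agit de contrôler $\int\!\!\int\!\!\int\!\!\int_{[ua,ta]^4} \mathbb{E}(Y_{s_1}Y_{s_2}Y_{s_3}Y_{s_4})\,ds$ en utilisant que ce moment d'ordre $4$ vaut, pour $s_1 \leq s_2 \leq s_3 \leq s_4$, le produit $e^{-2a(s_2-s_1)}e^{-2a(s_4-s_3)}$ (les « arêtes » entre sauts consécutifs), puis en intégrant.

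Une alternative, peut-être plus dans l'esprit spectral du reste de l'article, serait de travailler directement avec les semi-groupes : on a explicité $P^a_t$ agissant sur $V_n$ par $e^{tK_n^{(a)}}$ avec $K_n^{(a)} g(y) = iny\,g(y) + a(g(-y) - g(y))$. Sur $V_n$, les valeurs propres de $K_n^{(a)}$ sont $-a \pm \sqrt{a^2 - n^2}$ pour $a > n$, et celle de plus grande partie réelle est $-a + \sqrt{a^2-n^2} = -a + a\sqrt{1 - n^2/a^2} \sim -\frac{n^2}{2a}$ quand $a \to \infty$. Ainsi le semi-groupe renormalisé $P^a_{ta}$ restreint à $V_n$ a pour valeur propre dominante $e^{-n^2 t/2}(1 + o(1))$, ce qui est exactement la valeur propre du semi-groupe de la chaleur $e^{t\Delta/2}$ sur $\mathbb{T}$ restreint au mode de Fourier $e^{inx}$. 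Il faudrait vérifier que l'autre valeur propre et la non-normalité des blocs $K_n^{(a)}$ deviennent négligeables dans cette échelle (le projecteur spectral sur la direction sous-dominante se contracte, et l'angle entre les deux droites propres, qui vaut $\arccos(n/a) \to \pi/2$, ne pose donc pas problème), et conclure à la convergence des marginales fini-dimensionnelles par transformée de Fourier. Le principal obstacle, quelle que soit la voie choisie, est le contrôle uniforme en $a$ — soit des moments d'ordre $4$ pour la tension, soit des termes sous-dominants et des vecteurs propres dans l'approche spectrale — afin de ne pas se contenter d'une convergence des lois marginales mais d'obtenir la convergence fonctionnelle annoncée.
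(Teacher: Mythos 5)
Votre premi\`ere voie comporte une lacune r\'eelle : la convergence des covariances (qui est correcte, et la constante tombe juste puisque $\int_{\mathbb{R}} a^2 e^{-2a^2|v|}\,dv=1$, donc $\mathbb{E}(X_{ta}X_{ua})\to\min(t,u)$ sans facteur parasite), jointe \`a la tension par le crit\`ere de Kolmogorov, ne suffit pas \`a identifier la limite : elle garantit seulement que toute valeur d'adh\'erence a la covariance du brownien, pas que ses marginales sont gaussiennes. Il vous manque l'ingr\'edient de type th\'eor\`eme central limite. C'est pr\'ecis\'ement ce que fournit la preuve de l'article, via le TCL fonctionnel pour les martingales (Ethier--Kurtz) : on v\'erifie que $M(t)=\tilde Y_t+2\int_0^t\tilde Y_u\,du$ est une martingale (cons\'equence de $\mathbb{E}(\tilde Y_t|\mathcal F_s)=\tilde Y_s e^{-2(t-s)}$), que $\frac1a M(a^2t)=2X^a_t+O(1/a)$, et que le crochet $\langle\tfrac1a M(a^2\cdot)\rangle_t=\tfrac{4}{a^2}Z_{a^2t}\to 4t$ par la loi des grands nombres, $Z$ comptant les sauts ; le th\'eor\`eme de convergence des martingales vers le brownien donne alors d'un seul coup l'identification gaussienne de la limite et la convergence fonctionnelle, sans calcul de moments d'ordre 4. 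Si vous tenez \`a votre calcul de corr\'elations, il faudrait le pousser \`a tous les ordres (la formule produit $\mathbb{E}(Y_{s_1}\cdots Y_{s_{2k}})=\prod_j e^{-2a(s_{2j}-s_{2j-1})}$ pour des temps ordonn\'es s'y pr\^ete) et conclure par la m\'ethode des moments.

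Votre seconde voie, spectrale, r\'epond en principe \`a cette objection (convergence des semi-groupes $P^a_{at}\to Q_t$ sur chaque $V_n$, puis encha\^inement des marginales fini-dimensionnelles par la propri\'et\'e de Markov du couple $(X,Y)$) ; l'article en exploite d'ailleurs une trace pour la convergence des normes en fin de section \ref{brownien}. Mais pour en faire une preuve de convergence en loi, il resterait \`a contr\^oler uniform\'ement en $n$ les termes sous-dominants et la d\'ependance r\'esiduelle en $y$ de $P^a_{at}f$ pour $f$ ne d\'ependant que de $x$, ce qui est sensiblement plus lourd que l'argument de martingale. En l'\'etat, aucune des deux voies n'est compl\`ete sur le point crucial, \`a savoir l'identification de la loi limite.
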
 

\rule{\linewidth}{.5pt}

\begin{proof}
Notons $\tilde{Y}_t = (-1)^{N_t}$ où $N_t$ est un processus de de Poisson de paramètre 1. Ainsi $X$ suit la même loi que $\int_0^. \tilde{Y}_{as}ds$
\begin{eqnarray*}
 X^a(t) & \overset{\mathcal L}{=} & \int_0^{ta} \tilde{Y}_{as}ds\\
& = & \frac1a\int_0^{ta^2} \tilde{Y}_{u}du
\end{eqnarray*}
ce qui nous ramène à l'exemple 3 p. 360 de \cite{convergence} où l'on nous indique la marche à suivre.

\bigskip

Détaillons : on montre d'abord que $M(t) = \tilde{Y}_t + 2\int_0^t \tilde{Y}_u du$ est une martingale. En effet le nombre de changement de signes de $\tilde{Y}_t$ dans une période $t-s$ suit une loi de Poisson de paramètre $t-s$, et ainsi

\begin{eqnarray*}
 \mathbb P(\tilde{Y}_t  =  \tilde{Y}_s) & = & \sum_{k \text{ pair}} \frac{(t-s)^k}{k!}e^{-(t-s)}  =  \cosh(t-s)e^{-(t-s)}\\
\mathbb P(\tilde{Y}_t  =  -\tilde{Y}_s) & = & \sum_{k \text{ impair}} \frac{(t-s)^k}{k!}e^{-(t-s)}  =  \sinh(t-s)e^{-(t-s)}
\end{eqnarray*}

Ainsi $\mathbb E(\tilde{Y}_t|\mathcal F_s) = \tilde{Y}_s e^{-2(t-s)}$ et
\begin{eqnarray*}
 \mathbb E(M(t)|\mathcal F_s) & = &  \tilde{Y}_s e^{-2(t-s)} + 2 \int_0^s \tilde{Y}_udu + 2 \int_s^t \tilde{Y}_s e^{-2(u-s)}du\\
& = & \tilde{Y}_s + 2 \int_0^s \tilde{Y}_udu\\
& = & M(s)
\end{eqnarray*}

Si l'on montre la convergence de la martingale $\frac1nM(n^2t) = 2 X^n_t +  \frac1n\tilde{Y}_t$ vers le brownien, on aura celle de $X^n$ ; or la première s'obtient de la convergence des crochets. La variation quadratique de $\int_0^s \tilde{Y}_udu$, processus 1-lipschitzien, est nulle, donc
\[<M>_t = \underset{\delta\rightarrow0}{\overset{\mathbb P}{\lim}} \sum_{t_i\in\pi}(\tilde{Y}_{t_{i+1}} - \tilde{Y}_{t_i})^2\]
où la limite en proba a lieu lorsque le pas $\delta$ de la partition $\pi$ de $[0,t]$ tend vers 0. Notons $Z_t$ le nombre de saut de $\tilde{Y}$ sur cet intervalle.

\begin{eqnarray*}
 \mathbb P(\sum_{t_i\in\pi}(\tilde{Y}_{t_{i+1}} - \tilde{Y}_{t_i})^2 \neq 4Z_t) & \leq & \mathbb P(\text{deux sauts sont distants de moins de }\delta)\\
& \underset{\delta\rightarrow0} \rightarrow & 0
\end{eqnarray*}

Ainsi $<M>_t = 4 Z_t$, et $< \frac1n M(n^2.) >_t = \frac{4}{n^2} Z_{n^2 t} \underset{n\rightarrow +\infty}{\longrightarrow} 4t$ (par la loi des grands nombres), ce qui donne la convergence de $\frac12 \frac1n M(n^2t)$ (et donc de $X^n_t$) vers le mouvement brownien standard (cf \cite{convergence}).
\end{proof}
\bigskip

Qu'en est-il de la norme ? Celle du modèle irréversible converge-t-elle vers celle du brownien ? C'est effectivement le cas. Le générateur du mouvement brownien est $\frac12\partial_x^2$, diagonalisable dans la base orthonormée des $x\mapsto e^{inx}$ pour les valeurs propres $-\frac{n^2}{2}$. Rappelons la norme du semi-groupe associé à $(X_t,Y_t)$ sur le plan $V_n$, quand $a>n$ :

\begin{eqnarray*}
\|P_t\|_{V_n}^2 & = &e^{2\lambda_1 t}\left( 1 + \frac{2}{\frac{\omega^2}{n^2}\left(\frac{1+\gamma}{1-\gamma}\right) + \frac{a}{n}\sqrt{1+\frac{\omega^2}{n^2}\left(\frac{1+\gamma}{1-\gamma}\right)^2} -1}\right)
\end{eqnarray*}

\noindent avec $\lambda_1 = -a+\sqrt{a^2-n^2}$, $\omega = \sqrt{a^2 - n^2}$ et $\gamma = e^{-2t\sqrt{a^2-n^2}}$. On observe que $a\lambda_1 \rightarrow -\frac12n^2$, $\omega \rightarrow +\infty$ et que $\gamma^a\rightarrow 0$ quand $a\rightarrow +\infty$ ; Au final, en notant $P^a_t$ le semi-groupe associé à $(X_{at},Y_{at})$, on récupère

\[\|P^a_t\|_{V_n} \underset{a\rightarrow +\infty}{\longrightarrow} e^{-\frac12n^2t},\]

\noindent ce qui est la norme du semi-groupe $Q_t$ associé au mouvement brownien sur la droite $\Vect\{x\mapsto e^{inx}\}$. En particulier la convergence pour $n=1$ donne la convergence de la norme globale $\|P^a_t -\mu\| \longrightarrow \|Q_t - \lambda\|$.

\section{Généralisations}\label{generalisation}

\subsection{Avec un potentiel général}

En fait le cas précédent, où la mesure invariante pour $X_t$ est la loi uniforme sur le cercle, est immédiatement généralisable à des processus admettant pour loi limite n'importe quelle mesure de la forme $\nu=e^{-V(x)}dx/(2\pi)$, où le potentiel $V$ est supposé normalisé de sorte que $\nu(\TT)=1$. En effet, considérons comme précédemment $Y_t\in\{-1,1\}$ qui, avec taux $a$, change de signe. Soit $X_t\in\mathbb{T}$ la solution de
\bqn{XY}dX_t = Y_te^{V(X_t)}dt.\eqn
Autrement dit $X_t$ représente la position d'une particule se déplaçant à vitesse (déterministe) inversement proportionnelle à la densité $e^{-V(x)}$ (les zones \og peu intéressantes\fg\ sont parcourues plus vite) et changeant de sens de parcours selon des temps exponentiels . Montrons qu'alors la mesure invariante pour $(X_t,Y_t)$ est $\mu=\nu\otimes\mathcal U_{\{-1,1\}}$, et que la norme 2 du semi-groupe associé se calcule exactement comme précédemment. Le générateur markovien associé au processus est

\[\mathcal{L}f(x,y) = e^{V(x)}y\partial_xf(x,y) + a\lt(f(x,-y)-f(x,y)\rt)\]
Et l'on vérifie

\begin{eqnarray*}
\lefteqn{\nu\otimes\mathcal U_{\{-1,1\}}\big[\mathcal{L}f(x,y)\big]} & &\\
\\
& = &\int_{x\in\mathbb{T}}\int_{y=\pm}\big[e^{V(x)}y\partial_xf(x,y) + a\lt(f(x,-y)-f(x,y)\rt)\big]e^{-V(x)}dxdy\\
\\
& = & \int_{y=\pm} y \left(\int_{x\in\mathbb{T}}\partial_xf(x,y)dx\right)dy\\
\\
& = & 0.
\end{eqnarray*}
Considérons pour $n\in\mathbb N$, $g_n(x) = \exp\left( in\int_0^xe^{-V(u)}du\right)$ (on a bien $g_n(0)=g_n(2\pi)$ de par la normalisation de $V$) et des fonctions de la forme $f(x,y) = g_n(x)h(y)$. On a alors

\bqn{gKa}
\nonumber \mathcal{L}f(x,y) & = & g_n(x)\big[ inyh(y)+a(h(-y)-h(y)\big]\\
& = & g_n(x) K_n^{(a)}h(y)
\eqn
où $K_n^{(a)}$  a été  défini dans le lemme \ref{plans} pour le cas uniforme. On parvient donc là encore à décomposer l'espace en plans stables $V_n$, et ces plans sont à nouveau orthogonaux entre eux dans $L^2(\mu)$ :

\begin{eqnarray*}
<g_n,g_k>_{L^2(\mu)} & = & \int_0^{2\pi} \exp\left( i(n-k)\int_0^xe^{-V(u)}du\right)e^{-V(x)}dx\\
& = & \int_0^{2\pi} e^{ i(n-k)u}du\\
& = & 2\pi \delta_{nk}
\end{eqnarray*}

Finalement, si $P^V_t$ est le semi-groupe associé au processus (et $P_t$ est toujours celui associé au potentiel nul), on a exactement
\[||P^V_t - \mu||_{L^2(\mu)} = ||P_t - \lambda\otimes\mathcal U_{\{-1,1\}}||_{L^2(\lambda\otimes\mathcal U_{\{-1,1\}})}\]

D'après la section 1, le meilleur taux de convergence asymptotique est donc obtenu en choisissant $a=1$.
Remarquons que lorsque $V$ n'est connu qu'à une constante additive près
et que l'on veut garder le bénéfice de l'écriture (\ref{XY}), il faut modifier en conséquence
la définition de $\nu$ et des $g_n$, pour $n\in\NN$,
et on doit remplacer $K_n^{(a)}$ par $Z^{-1}K_n^{(aZ)}$, avec
$Z\df \int_0^{2\pi}e^{-V(x)}\,dx/(2\pi)$ dans (\ref{gKa}).
Le choix optimal de $a$ est alors $Z^{-1}$, qui malheureusement n'est pas connu en pratique.

\subsection{Remarque sur les dimensions supérieures}

Remarquons que, dans l'optique d'un algorithme de Monte-Carlo non réversible, les résultats s'adaptent à la dimension supérieure. Ainsi en définissant $Y_t^1,\dots,Y_t^d$ et $X_t^1,\dots,X_t^d$ comme précédemment, dans le cas où $V(x) = \sum V_i(x_i)$, on construit un semi-groupe $P^V_t$ sur $\mathbb T^d$ de mesure invariante $\mu$ proportionnelle à $e^{-V(x)}dx\otimes \mathcal U_{\{-1,1\}}^{\otimes d}$ et de norme

\[||P^V_t||_{L^2(\nu)} = \prod_{i=1}^d||P^{V_i}_t||_{L^2(Z_ie^{-V_i(x_i)}dx_i\otimes U_{\{-1,1\}})}\] 
où les $Z_i$, $i\in\lin 1,d\rin$, sont les constantes de normalisation.
On aurait pu imaginer un autre processus, construit en gardant l'idée d'une particule dont la vitesse scalaire dépendrait de façon déterministe de la position mais dont la direction changerait aléatoirement à taux constant. Cela donnerait un générateur du type :
\begin{eqnarray*}
\mathcal{L}f(x,y) & = & e^{V(x)}\na_xf(x,y).y + a \int_{\mathbb{S}^d}\lt(f(x,z)-f(x,y)\rt)dz\\
\end{eqnarray*}
pour des fonctions tests $f$ régulières. Ci-dessus les vitesses sont prises uniformément sur la sphère mais on aurait pu les choisir différemment sans que les remarques à suivre ne s'en trouvent modifiées. La mesure invariante est alors $Ze^{-V(x)}dx\otimes\mathcal U_{\mathbb S^d}$, avec $Z=Z_1\cdots Z_d$, ce qui semble bien parti. Néanmoins, à part pour un potentiel nul, on ne va pas pouvoir se ramener à l'étude d'un opérateur sur les vitesses par la même méthode qu'avant, c'est-à-dire en trouvant des fonctions propres de la famille d'opérateurs $K_y : f(x)\mapsto e^{V(x)}\na f(x).y$ sous la forme $f(x)=e^{u(x)}$, qui permettaient jusqu'ici de se ramener à des opérateurs n'agissant que sur les vitesses. En effet on a alors
\[K_yf(x) = e^{V(x)}f(x)\na u(x).y\]
Il s'agirait donc de trouver une fonction $u : \mathbb R^d \rightarrow \mathbb R$ de différentielle $x\mapsto e^{-V(x)}(c_1 dx_1+c_2dx_2+\dots+c_ddx_d)$, avec $c_1, ..., c_d$ des constantes. Or si $d>1$, cette 1-forme linéaire n'est pas exacte (n'étant pas fermée), et un tel $u$ ne saurait exister. En comparaison, pour le processus avec des coordonnées indépendantes du début de ce paragraphe, la 1-forme linéaire qui apparaît est $x\mapsto \sum e^{-V_i(x_i)}dx_i$, qui est bel et bien exacte.

\section{Appendice}

\begin{lem}\label{optf}
Si $f(R) = \frac{R-a}{R^2 + b}$ avec $b>0$, alors $f$ admet ses valeurs extrémales en $R_\pm = a \pm \sqrt{a^2 + b}$, et ces valeurs sont $f(R_\pm) = \frac{1}{2R_\pm}$.
\end{lem}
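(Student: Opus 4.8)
The plan is to treat this as an elementary one-variable calculus exercise. Since $b>0$, the denominator $R^2+b$ is everywhere strictly positive, so $f$ is a well-defined $C^\infty$ function on all of $\RR$, and I would first compute its derivative by the quotient rule:
\[
f'(R) = \frac{(R^2+b) - (R-a)\cdot 2R}{(R^2+b)^2} = \frac{-R^2 + 2aR + b}{(R^2+b)^2}.
\]
Hence the critical points of $f$ are exactly the roots of $R^2 - 2aR - b = 0$, namely $R_\pm = a \pm \sqrt{a^2+b}$; these are two distinct real numbers precisely because $b>0$ makes the discriminant $4(a^2+b)$ positive.

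Next I would check that $R_+$ and $R_-$ do realise the extremal \emph{values} of $f$ on $\RR$, not merely local ones. Since $f(R)\to 0$ as $R\to\pm\infty$ while, say, $f(a+1)>0$ and $f(a-1)<0$, the function attains on $\RR$ a strictly positive maximum and a strictly negative minimum; each is attained at an interior point, hence at a critical point. As there are exactly two critical points, one of them is the global maximum and the other the global minimum of $f$. This is the first assertion of the lemma.

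For the values themselves, rather than substituting the explicit formula for $R_\pm$ into $f$, I would use the following shortcut: for $R\neq 0$ the identity $f(R)=\frac{1}{2R}$ is equivalent, after clearing the two nonzero denominators, to $2R(R-a)=R^2+b$, i.e.\ to $R^2-2aR-b=0$, which is exactly the critical-point equation found above. Since $R_\pm=0$ would force $b=0$, contradicting $b>0$, we get $f(R_\pm)=\frac{1}{2R_\pm}$, as claimed. There is no genuine obstacle in this proof; the only step deserving an explicit word is the passage from ``critical point'' to ``global extremum'', which is settled by the behaviour of $f$ at $\pm\infty$ noted above.
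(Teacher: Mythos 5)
Your proof is correct and complete: the derivative computation, the identification of the critical points as the roots of $R^2-2aR-b=0$, the argument from the decay of $f$ at $\pm\infty$ that these are global extrema, and the observation that $f(R)=\frac{1}{2R}$ is equivalent to the critical-point equation are all sound. The paper itself gives no proof of this lemma (it merely states that it presents no difficulty), and your elementary calculus argument is precisely the routine verification being alluded to.
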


\begin{lem}\label{optg}
Si $g(\theta) = \frac{\alpha + \cos(\theta-s)}{\alpha + \cos(\theta)}$ avec $\alpha > 1$, alors

\[\underset{\theta\in\mathbb T}{\max}\ g(\theta) = 1 + \frac{2}{\sqrt{\frac{2(\alpha^2 - 1)}{1 - \cos(s)}+1} - 1}\]
De plus ce maximum est majoré par $\frac{\alpha+1}{\alpha-1}$, borne atteinte uniquement pour $s = \pi\ [2\pi]$.
\end{lem}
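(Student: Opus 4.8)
Le plan est de ramener l'optimisation sur $\theta\in\mathbb T$ au lemme~\ref{optf}. Comme $\alpha>1$, le d\'enominateur $\alpha+\cos\theta$ est minor\'e par $\alpha-1>0$, donc $g$ est continue sur le tore compact et y atteint son maximum. Quitte \`a remplacer $(\theta,s)$ par $(-\theta,-s)$, ce qui ne change ni le maximum de $g$ (car $g(-\theta,-s)=g(\theta,s)$) ni le membre de droite (qui ne d\'epend de $s$ que via $\cos s$), on peut supposer $s\in[0,\pi]$ ; le cas $s\equiv0$ \'etant trivial ($g\equiv1$), on suppose d\'esormais $s\in(0,\pi]$.

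D'abord je sym\'etriserais en posant $\phi=\theta-s/2$, $c=\cos(s/2)\in[0,1)$ et $\sigma=\sin(s/2)\in(0,1]$, de sorte que $\cos\theta=c\cos\phi-\sigma\sin\phi$, $\cos(\theta-s)=c\cos\phi+\sigma\sin\phi$ et donc $g=1+\dfrac{2\sigma\sin\phi}{\alpha+c\cos\phi-\sigma\sin\phi}$. Avec le changement de variable $u=\tan(\phi/2)$ (licite puisqu'on peut prendre $\phi\in(-\pi,\pi)$, le point $\phi=\pi$ donnant $g=1$ et se rattrapant par continuit\'e), ceci devient $g-1=\dfrac{4\sigma u}{(\alpha-c)u^2-2\sigma u+(\alpha+c)}$. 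Comme $\alpha-c>0$, en compl\'etant le carr\'e au d\'enominateur avec $R=u-\sigma/(\alpha-c)$ et en utilisant $c^2+\sigma^2=1$, on obtient $g-1=\dfrac{4\sigma}{\alpha-c}\cdot\dfrac{R-\beta}{R^2+\kappa}$ avec $\beta=-\sigma/(\alpha-c)$ et $\kappa=(\alpha^2-1)/(\alpha-c)^2>0$. Lorsque $\phi$ parcourt $(-\pi,\pi)$, $R$ parcourt $\mathbb R$ tout entier, donc $\max_\theta g$ vaut $1$ plus $\dfrac{4\sigma}{\alpha-c}$ fois le maximum sur $\mathbb R$ de $R\mapsto\dfrac{R-\beta}{R^2+\kappa}$.

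Le lemme~\ref{optf} fournit alors les valeurs extr\'emales $\tfrac1{2R_\pm}$ de cette fonction, avec $R_\pm=\beta\pm\sqrt{\beta^2+\kappa}$ ; comme $\beta^2+\kappa=\dfrac{\sigma^2+\alpha^2-1}{(\alpha-c)^2}=\dfrac{\alpha^2-c^2}{(\alpha-c)^2}=\dfrac{\alpha+c}{\alpha-c}$ et $\sqrt{\alpha^2-c^2}>\sqrt{1-c^2}=\sigma$, on a $R_+>0>R_-$, si bien que le maximum cherch\'e est $\tfrac1{2R_+}=\dfrac{\alpha-c}{2(\sqrt{\alpha^2-c^2}-\sigma)}$. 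D'o\`u $\max_\theta g=1+\dfrac{2\sigma}{\sqrt{\alpha^2-c^2}-\sigma}=1+\dfrac{2}{\sqrt{(\alpha^2-c^2)/\sigma^2}-1}$, et avec $c^2=(1+\cos s)/2$, $\sigma^2=(1-\cos s)/2$ on v\'erifie $(\alpha^2-c^2)/\sigma^2=\dfrac{2\alpha^2-1-\cos s}{1-\cos s}=\dfrac{2(\alpha^2-1)}{1-\cos s}+1$, ce qui est la formule annonc\'ee. Pour la derni\`ere assertion, $1-\cos s$ est croissant sur $(0,\pi]$ et atteint son maximum $2$ en $s=\pi$, donc $\sqrt{\frac{2(\alpha^2-1)}{1-\cos s}+1}\ge\alpha$ avec \'egalit\'e si et seulement si $s\equiv\pi\ [2\pi]$ ; par suite $\max_\theta g\le1+\frac2{\alpha-1}=\frac{\alpha+1}{\alpha-1}$, cette borne n'\'etant atteinte que pour $s\equiv\pi\ [2\pi]$.

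Aucune \'etape n'est profonde ; l'unique difficult\'e est de nature calculatoire : le d\'ecalage sym\'etrisant, la substitution de Weierstrass, la compl\'etion du carr\'e, et surtout le suivi des signes pour retenir $R_+$ (le maximum) et non $R_-$, puis la r\'e\'ecriture de $\sqrt{(\alpha^2-c^2)/\sigma^2}$ sous la forme voulue. Une variante \'evitant la substitution consiste \`a d\'eriver $g$ directement : $g'=0$ \'equivaut \`a $\bigl(\tfrac{\sigma\sin\phi}{\alpha+c\cos\phi}\bigr)'=0$, soit $\cos\phi=-c/\alpha$, et l'\'evaluation de $g$ en ce point redonne le r\'esultat ; mais passer par le lemme~\ref{optf} garde le texte autonome.
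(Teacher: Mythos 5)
Votre preuve est correcte, et elle suit un chemin r\'eellement diff\'erent de celui du papier. Le papier ne passe pas par le lemme~\ref{optf} pour d\'emontrer le lemme~\ref{optg} : il d\'erive $g$ directement, observe que $g'(\theta)=0$ est une \'equation affine en $(\cos\theta,\sin\theta)$, param\`etre la droite de solutions, l'intersecte avec le cercle unit\'e via une \'equation du second degr\'e en $t$ (dont l'existence de deux racines r\'eelles est justifi\'ee par la p\'eriodicit\'e et la non-constance de $g$), puis \'evalue $g$ aux deux points critiques et simplifie. Vous, au contraire, sym\'etrisez par $\phi=\theta-s/2$, appliquez la substitution $u=\tan(\phi/2)$ et une compl\'etion de carr\'e pour vous ramener exactement \`a la fonction $R\mapsto\frac{R-\beta}{R^2+\kappa}$ du lemme~\ref{optf} ; vos calculs ($\kappa=(\alpha^2-1)/(\alpha-c)^2$, $\beta^2+\kappa=\frac{\alpha+c}{\alpha-c}$, puis $(\alpha^2-c^2)/\sigma^2=\frac{2(\alpha^2-1)}{1-\cos s}+1$) sont tous exacts. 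Ce que votre approche apporte : l'identification du maximum (et non du minimum) est imm\'ediate par le signe de $R_\pm$ et la d\'ecroissance de $f$ \`a l'infini, l\`a o\`u le papier doit discuter le signe de $\varepsilon$ ; le point exclu $\phi=\pi$ est trait\'e proprement ; et vous d\'emontrez explicitement la derni\`ere assertion (majoration par $\frac{\alpha+1}{\alpha-1}$ avec cas d'\'egalit\'e), que le papier laisse implicite comme cons\'equence \'evidente de la formule. Ce que l'approche du papier apporte : elle \'evite le changement de variable de Weierstrass et reste enti\`erement sur le cercle, au prix d'une simplification alg\'ebrique finale un peu plus longue. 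Les deux arguments sont \'el\'ementaires et de longueur comparable.
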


\begin{proof}
Le premier lemme ne présente aucune difficulté. Pour le second, remarquons tout d'abord pour $s=0\ [2\pi]$ que $g$ est alors constante égale à 1 et son max l'est également, le lemme est donc vrai dans ce cas. Supposons dans la suite que $1-\cos(s) \neq 0$. Réécrivons maintenant
\[g(\theta) = \frac{\alpha + \cos(\theta +s)}{\alpha + \cos(\theta)} = \cos(s) + \frac{\alpha (1 -  \cos (s)) - \sin(\theta)\sin(s)}{\alpha + \cos(\theta)}\]
$g(\theta)$ étant continue périodique il suffit de déterminer ses points critiques. Or $g'(\theta)=0$ équivaut à
\begin{eqnarray*}
0 & = & - \cos(\theta)\sin(s)\lt(\alpha + \cos(\theta)\rt) + \sin(\theta)\lt(\alpha(1-\cos(s))-\sin(\theta)\sin(s)\rt)\\
& = & -\sin(s) + \sin(\theta)\lt(\alpha(1-\cos(s)\rt) - \alpha\cos(\theta)\sin(s),
\end{eqnarray*}
équation affine dont les solutions sont
\[\begin{pmatrix}\cos(\theta)\\ \sin(\theta)\end{pmatrix} = \begin{pmatrix} - \beta\\ 0\end{pmatrix} + t \begin{pmatrix}1 - \cos(s)\\ \sin(s)\end{pmatrix}\]
pour  $t\in\mathbb{R}$ et où l'on note $\beta = \frac{1}{\alpha}$. La condition $\cos^2 + \sin^2 = 1$ équivaut à

\[t^2 - \beta t + \frac{\beta^2 - 1}{1 - \cos(s)}\]
qui admet nécessairement deux solutions réelles puisque $g$ est périodique non constante donc possède au moins deux points critiques. Ces solutions sont données par

\[t (1 - \cos(s)) = \frac12\beta(1 - \cos(s)) + \frac12\varepsilon\sqrt{2(1 - \cos(s)) - \beta^2\sin^2(s)}\]
où $\varepsilon = \pm 1$. On obtient ainsi les valeurs extrêmales de $g$ :
\begin{eqnarray*}
g(\theta_\varepsilon) & = & \cos(s) + \frac{\alpha (1 -  \cos (s)) - t\sin^2(s)}{\alpha - \beta + t(1 - \cos(s))}\\
\\
& = & \frac{\alpha - t\sin^2(s) - \beta \cos(s) + t\cos(s) - t\cos^(s)}{\alpha - \beta + t(1 - \cos(s))}\\
\\
& = &  \frac{\alpha - \beta \cos(s) - t (1 - \cos(s))}{\alpha - \beta + t(1 - \cos(s))}\\
\\
& = & \frac{\alpha - \frac12\beta(1+ \cos(s)) - \frac12\varepsilon\sqrt{2(1 - \cos(s)) - \beta^2\sin^2(s)}}{\alpha - \frac12\beta(1+ \cos(s))  + \frac12\varepsilon\sqrt{2(1 - \cos(s)) - \beta^2\sin^2(s)}}\\
\end{eqnarray*}
Puisque $\alpha > 1 > \beta$, on a $\alpha - \frac12\beta(1+ \cos(s)) > 0$ et la valeur ci-dessus est maximale pour $\varepsilon = -1$, et ainsi
\begin{align*}
\underset{\theta\in\mathbb T}{\max} g(\theta) & =  \frac{\alpha - \beta\left(\frac{1+ \cos(s)}{2}\right) + \sqrt{\left(\frac{1 - \cos(s)}{2}\right)\lt(1 - \beta^2\frac{1+ \cos(s)}{2}\rt)}}{\alpha -\beta\left(\frac{1+ \cos(s)}{2}\right)  -\sqrt{\left(\frac{1 - \cos(s)}{2}\right)\lt(1 - \beta^2\frac{1+ \cos(s)}{2}\rt)}} \times\frac{\alpha}{\alpha}\\
\\
& =  \frac{\alpha^2 - \left(\frac{1+ \cos(s)}{2}\right) + \sqrt{\left(\frac{1 - \cos(s)}{2}\right)\lt(\alpha^ 2 - \frac{1+ \cos(s)}{2}\rt)}}{\alpha^2 - \left(\frac{1+ \cos(s)}{2}\right) - \sqrt{\left(\frac{1 - \cos(s)}{2}\right)\lt(\alpha^ 2 - \frac{1+ \cos(s)}{2}\rt)}} \times \frac{\sqrt{\alpha^ 2 - \frac{1+ \cos(s)}{2}}}{\sqrt{\alpha^ 2 - \frac{1+ \cos(s)}{2}}}\\
& =  \frac{\sqrt{\alpha^2 - \frac{1 + \cos(s)}{2}} + \sqrt{\frac{1 - \cos(s)}{2}}}{\sqrt{\alpha^2 - \frac{1 + \cos(s)}{2}} - \sqrt{\frac{1 - \cos(s)}{2}}} \times \frac{\sqrt{\frac{1 - \cos(s)}{2}}}{\sqrt{\frac{1 - \cos(s)}{2}}}\\
\\
& =  \frac{\sqrt{\frac{2(\alpha^2 - 1)}{1 - \cos(s)}+1} + 1}{\sqrt{\frac{2(\alpha^2 - 1)}{1 - \cos(s)}+1} - 1}\\
\\
& =  1 + \frac{2}{\sqrt{\frac{2(\alpha^2 - 1)}{1 - \cos(s)}+1} - 1}
\end{align*}
\end{proof}

\begin{lem}\label{jerem}
 Considérons $M\in\NN^*$, $T_i >0$, pour $1\leq i \leq M$, et $\delta >0$ donnés. Il existe $t\geq 1$ et des entiers $k_1, \dots, k_{M}$ tels que pour tout $1\leq i \leq M$,
\[|k_i T_i - t|<\delta\]
\end{lem}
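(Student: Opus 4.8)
The statement is a simultaneous Diophantine approximation result: given finitely many positive reals $T_1,\dots,T_M$ and a tolerance $\delta>0$, one wants a single $t\ge 1$ that is $\delta$-close to an integer multiple of each $T_i$. The natural tool is Weyl's equidistribution / Kronecker's theorem. The plan is to view the map $t\mapsto \left(\frac{t}{T_1},\dots,\frac{t}{T_M}\right) \bmod 1$ as a linear flow on the torus $\mathbb{T}^M$ and to argue that its orbit comes arbitrarily close to the origin $(0,\dots,0)$, at arbitrarily large times.

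\textbf{Key steps.} First, rephrase the conclusion: $|k_iT_i - t|<\delta$ for some integer $k_i$ means exactly that $\frac{t}{T_i}$ is within $\frac{\delta}{T_i}$ of an integer, i.e. the fractional part of $t/T_i$ lies in $(-\delta/T_i,\delta/T_i) \bmod 1$. So setting $\delta' = \min_i \delta/T_i$, it suffices to find $t\ge 1$ with $\mathrm{dist}(t/T_i,\mathbb{Z})<\delta'$ for all $i$ simultaneously. Second, consider the one-parameter subgroup $\{(t/T_1,\dots,t/T_M) : t\in\mathbb{R}\}$ of $\mathbb{T}^M$; its closure $H$ is a closed connected subgroup, hence a subtorus, and crucially it contains $0$. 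Therefore the orbit accumulates at $0$: there exist arbitrarily large $t$ with $(t/T_1,\dots,t/T_M)$ arbitrarily close to $0$ in $\mathbb{T}^M$. Concretely one can take a sequence $t_n\to+\infty$ with $\mathrm{dist}(t_n/T_i,\mathbb{Z})\to 0$; pick $n$ large enough that this distance is $<\delta'$ and that $t_n\ge 1$. Third, unwind: for that $t=t_n$, let $k_i$ be the nearest integer to $t/T_i$; then $|t/T_i - k_i|<\delta/T_i$, hence $|k_iT_i - t|<\delta$, as required. One should also note $k_i\in\mathbb{N}$ for $t$ large since $t/T_i\to+\infty$.

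\textbf{A more elementary alternative.} If one prefers to avoid invoking the structure of closed subgroups of $\mathbb{T}^M$, one can proceed by induction on $M$ using the pigeonhole principle directly (a Dirichlet-type argument): for $M=1$ the result is immediate. For the inductive step, the set of times that are $\delta$-good for $T_1,\dots,T_{M-1}$ is \emph{syndetic} (relatively dense) — indeed it contains a translate of a full set of ``return times'' which, by the $M=1$ reasoning applied to well-chosen combinations, is arithmetically structured — and one then intersects with the $\delta$-good times for $T_M$. Honestly the torus-flow argument is cleaner, so that is the one I would write up.

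\textbf{Main obstacle.} The real content is establishing that the orbit closure contains $0$ and that the orbit returns near $0$ at \emph{large} times (not merely near $0$ at some time). Connectedness of the closure of a one-parameter subgroup handles the first point; the second follows because a subtorus is a group, so if the orbit is $\varepsilon$-close to $0$ at some time $s$, it is $2\varepsilon$-close at times $s, 2s, 3s,\dots$ by almost-periodicity, giving arbitrarily large good times. The constraint $t\ge 1$ is then a triviality. The bookkeeping converting the torus statement back into the inequalities $|k_iT_i-t|<\delta$ is routine and is where I would be careful but not detailed.
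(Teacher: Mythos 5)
Your overall strategy --- Kronecker-type recurrence for the linear flow $\phi(t)=(t/T_1,\dots,t/T_M)\bmod 1$ on $\mathbb{T}^M$ --- is viable and genuinely different from the paper's, which instead applies Minkowski's convex body theorem to the lattice of $\mathbb{R}^{M+1}$ generated by $(0,\dots,0,T_n,0,\dots,0)$, $1\leq n\leq M$, and $(1,\dots,1)$, intersected with the box $[-\delta,\delta]^M\times[-V\delta^{-M},V\delta^{-M}]$. But the crucial step of your argument, namely that the orbit returns near $0$ at some time $t\geq 1$, is not correctly justified. The observation that the closure of the one-parameter subgroup is a subtorus containing $0$ is vacuous here: $0$ already lies on the orbit, at $t=0$, and this says nothing about recurrence at positive times. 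The repair you offer in your last paragraph is false: if $\phi(s)$ is $\varepsilon$-close to $0$, then $\phi(ns)=n\phi(s)$ is only guaranteed to be $n\varepsilon$-close to $0$, not $2\varepsilon$-close; for $M=1$ and $T_1$ irrational, taking $n\approx 1/(2\varepsilon)$ puts $\phi(ns)$ near $1/2$, as far from $0$ as possible. So, as written, the proof produces no admissible $t\geq 1$.

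The gap is easy to close, and more cheaply than the inductive ``syndetic'' scheme you sketch as an alternative: a single pigeonhole suffices. Set $\delta'=\min_i \delta/T_i$, cover $\mathbb{T}^M$ by $\lceil 1/\delta'\rceil^M$ boxes of side at most $\delta'$, and consider $\phi(0),\phi(1),\dots,\phi(N)$ with $N=\lceil 1/\delta'\rceil^M$. Two of these points, $\phi(j_1)$ and $\phi(j_2)$ with $j_1<j_2$, lie in the same box, so every coordinate of $\phi(j_2-j_1)=\phi(j_2)-\phi(j_1)$ is within $\delta'$ of $\mathbb{Z}$, and $t=j_2-j_1\geq 1$ works. (Note that Lemma \ref{jerem} only requires \emph{one} $t\geq 1$, not arbitrarily large times; the passage to large times is handled separately, by an iteration, in the proof of Lemma \ref{gn}.) Alternatively, one can invoke the fact that $\overline{\phi([1,\infty))}$ is a closed sub-semigroup of the compact group $\mathbb{T}^M$, hence a subgroup, hence contains $0$. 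Either of these repairs makes your argument complete; without one of them, the key step fails.
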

\begin{proof}
 Considérons le réseau de $\mathbb R^{M+1}$ engendré par les $(0,\dots,0,T_{n},$ \linebreak
 $0,\dots,0)$ (avec $T_n$ en $n^{\hbox{\scriptsize ième}}$ position) pour $n\leq M$ et par $(1,1,\dots,1)$, de volume fondamental $V$ le produit des $T_n$. Ainsi, en considérant le pavé $[-\delta,\delta]\times\dots\times[-\delta,\delta]\times[-V\delta^{-M},V\delta^{-M}]$, de volume $V 2^d$, on sait par le théorème de Minkowski qu'il contient au moins un point du réseau autre que l'origine. Les $M$ premières coordonnées de ce point sont de la forme $h_nT_n + h_{M+1}$ avec $h_j \in \mathbb Z$ pour $j\in \lin 1,M+1\rin$. Si $\delta < \min(T_1,\dots,T_{M},1)$ (et quitte à réduire $\delta$, nous supposons ceci satisfait), aucun de ces coefficients $h_j$ ne peut être nul, et nécessairement $h_{M+1}$ est de signe opposé aux autres $h_j$. Il suffit donc de prendre $k_n = |h_n|$ et $t=|h_{M+1}|$.
\end{proof}

\begin{lem}\label{decroi}
Notons, pour $s>0$ et $p\in]0,1[$,
\begin{eqnarray*}
 h(p) & = & \frac{p}{1-p^2}\frac{1+e^{-ps}}{1-e^{-ps}}\\
\\
\phi(p) & = & e^{ps}\left(1+\frac{2}{ph(p)+\sqrt{h(p)^2+\frac{1}{1-p^2}}-1}\right).
\end{eqnarray*}
Alors pour tout $s$, $p\mapsto \phi(p)$ est croissante. 

En prenant $p = \sqrt{1-\left(\frac{n}{a}\right)^2}$ et $s = 2at$ on obtient en particulier que pour $t>0$ et $a>0$, $n\in]0,a[ \mapsto R(t,a,n)$ est décroissante.
\end{lem}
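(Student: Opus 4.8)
The goal is to show that $p\mapsto\phi(p)$ is increasing on $]0,1[$ for every fixed $s>0$, and then to transport this monotonicity back to $n\mapsto R(t,a,n)$ via the stated substitution. Let me sketch the two halves.

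For the substitution step: with $p=\sqrt{1-(n/a)^2}$ and $s=2at$, note that $p$ is a \emph{decreasing} function of $n$ on $]0,a[$, so it suffices to check that $\phi(p)=R(t,a,n)$ under this change of variables. Recall from the lemma treating $a>n$ that
\[R(t,a,n)=e^{-2(a-\sqrt{a^2-n^2})t}\left(1+\frac{2}{\omega^2\left(\frac{1+\gamma}{1-\gamma}\right)+\frac{a}{n}\sqrt{1+\omega^2\left(\frac{1+\gamma}{1-\gamma}\right)^2}-1}\right),\]
with $\omega=\sqrt{(a/n)^2-1}$ and $\gamma=e^{-2\sqrt{a^2-n^2}\,t}$. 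One computes $a\sqrt{a^2-n^2}=a^2 p$, so $2(a-\sqrt{a^2-n^2})t = s - ps$ and $e^{-2(a-\sqrt{a^2-n^2})t}=e^{-s}e^{ps}$; also $\gamma=e^{-ps}$, $\omega^2=(a/n)^2-1=p^2/(1-p^2)$, and $a/n=1/\sqrt{1-p^2}$. Substituting, $\omega^2\frac{1+\gamma}{1-\gamma}=\frac{p^2}{1-p^2}\frac{1+e^{-ps}}{1-e^{-ps}}=p\,h(p)$ and $\frac{a}{n}\sqrt{1+\omega^2(\frac{1+\gamma}{1-\gamma})^2}=\sqrt{\frac{1}{1-p^2}+\frac{p^2}{(1-p^2)^2}\frac{(1+e^{-ps})^2}{(1-e^{-ps})^2}}=\sqrt{h(p)^2+\frac{1}{1-p^2}}$. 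Hence $R(t,a,n)=e^{-s}\phi(p)$, and since $e^{-s}$ does not depend on $n$, monotonicity of $\phi$ in $p$ (increasing) plus $p$ decreasing in $n$ gives $R(t,a,n)$ decreasing in $n$, as claimed. This part is routine algebra.

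The real content is showing $\phi'(p)>0$. I would write $\phi(p)=e^{ps}\psi(p)$ where $\psi(p)=1+\frac{2}{D(p)}$ and $D(p)=p\,h(p)+\sqrt{h(p)^2+\frac{1}{1-p^2}}-1$. Then $\phi'(p)=e^{ps}\big(s\,\psi(p)+\psi'(p)\big)$, so it suffices to prove $s\,\psi(p)+\psi'(p)>0$, i.e. $\psi'(p)/\psi(p)>-s$, for all $s>0$ and $p\in]0,1[$. The term $e^{-ps}$ inside $h$ is the only place $s$ enters $\psi$; the dangerous regime is $s$ small (as $s\to0$, $h(p)\to\frac{p}{1-p^2}\cdot\frac{2}{ps}\to\infty$, so one must track the $s$-dependence carefully). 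I expect the clean route is to introduce $q\df e^{-ps}\in]0,1[$ and treat $\psi$ as a function of the pair $(p,q)$, then show $\frac{\partial\psi}{\partial p}\big|_q\ge0$ and $\frac{\partial\psi}{\partial q}\le0$; since increasing $s$ decreases $q=e^{-ps}$, both partials push $\psi$ (hence $\phi=e^{ps}\psi$) in the increasing direction once combined with the explicit $e^{ps}$ prefactor — but this needs the prefactor to dominate, which is exactly the inequality $s\psi+\psi'>0$ and cannot be reduced to sign-of-partials alone. So more honestly: I would first establish that $D(p)$ is \emph{decreasing} in $q$ for fixed $p$ (hence $\psi$ increasing in $s$ at fixed $p$), which handles the $q$-channel favorably, and then bound the remaining $p$-derivative. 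Concretely, write everything over a common denominator, clear the square root by rationalizing, and reduce $s\psi+\psi'>0$ to a polynomial-type inequality in $p$ and $q=e^{-ps}$ subject to the constraint $q=e^{-ps}$; the constraint is what makes this delicate, because one needs an inequality relating $\log q = -ps$ to rational functions of $p,q$ — I would invoke elementary bounds such as $\frac{1-q}{1+q}\le \frac{ps}{2}$ equivalently $\frac{1+q}{1-q}\ge\frac{2}{ps}$ (i.e. $\coth(ps/2)\ge 2/(ps)$), which controls $h(p)$ from below, together with $\frac{1+q}{1-q}\le \frac{2}{ps}+\frac{ps}{6}$ or a similar two-sided estimate for the upper bound.

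The main obstacle, as just indicated, is this last reduction: turning $s\,\psi(p)+\psi'(p)>0$ into something verifiable requires carefully coupling the transcendental relation $q=e^{-ps}$ with rational estimates, and the expression $D(p)$ contains a nested square root that must be handled either by rationalization (which squares the inequality and may introduce sign subtleties about which branch dominates) or by the substitution $\sqrt{h^2+\frac{1}{1-p^2}}=h\sqrt{1+\frac{1}{h^2(1-p^2)}}$ followed by a convexity/monotonicity estimate on $\sqrt{1+x}$. I would therefore structure the proof as: (i) perform the change of variables to confirm $R=e^{-s}\phi$ — routine; (ii) reduce to $s\psi+\psi'>0$ — routine; (iii) show $D$ is decreasing in $q$ at fixed $p$ by direct differentiation, disposing of the $q$-dependence monotonically — short; (iv) for the $p$-dependence, use the substitution $q=e^{-ps}$ and the classical inequalities $\coth(x)\ge 1/x$ (lower) and $\coth(x)\le 1/x + x/3$ (upper) with $x=ps/2$ to sandwich $h(p)$, reducing the claim to a finite verification of a rational inequality in $p\in]0,1[$ — this is where the work concentrates and where I would expect to spend most of the effort, possibly needing a slightly sharper bound than $\coth(x)\le 1/x+x/3$ near $p\to1$. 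If the rational inequality turns out too cumbersome, a fallback is to prove $\phi$ increasing by showing $\log\phi(p)=ps+\log\psi(p)$ has nonnegative derivative, i.e. $s\ge -\psi'/\psi = \frac{2D'(p)}{D(p)(D(p)+2)}$, and to bound $D'(p)$ crudely while bounding $D(p)$ from below by its small-$s$ asymptotics, which already gives the result for $s$ bounded away from $0$; the regime $s\to 0$ is then handled by the leading-order expansion $D(p)\sim \frac{2p}{(1-p^2)ps}=\frac{2}{(1-p^2)s}$, so $\psi(p)\sim 1+(1-p^2)s$ and $\phi(p)\sim (1+ps)(1+(1-p^2)s)$ which is visibly increasing in $p$ for small $s$.
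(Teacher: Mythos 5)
Your reduction of the second claim to the first is correct and matches what the paper leaves implicit: with $p=\sqrt{1-(n/a)^2}$, $s=2at$ one indeed gets $\gamma=e^{-ps}$, $\omega^2\frac{1+\gamma}{1-\gamma}=p\,h(p)$, $\frac{a}{n}\sqrt{1+\omega^2(\frac{1+\gamma}{1-\gamma})^2}=\sqrt{h(p)^2+\frac{1}{1-p^2}}$, hence $R(t,a,n)=e^{-s}\phi(p)$ with $p$ decreasing in $n$. But the heart of the lemma --- the monotonicity of $\phi$ --- is not proved. What you give is a branching list of strategies (``I would write\dots'', ``I expect the clean route is\dots'', ``if the rational inequality turns out too cumbersome, a fallback is\dots''), and the decisive inequality $s\,\psi(p)+\psi'(p)>0$ is never established by any of them; you yourself flag that this is ``where the work concentrates''. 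The paper closes this gap by a computer-assisted argument: Maple computes $\phi'$, whose sign reduces to that of a numerator vanishing only where $e^{p}=\frac{2+p}{2-p}$, i.e.\ only at $p=0$; monotonicity then follows, and the direction is fixed by comparing the endpoint values $\phi(0^+)=1+\frac{2}{\sqrt{4s^{-2}+1}-1}$ and $\phi(1^-)=e^{s}$ via $2e^{s}-2-s^{2}>0$. Nothing in your proposal substitutes for that computation.

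Two of the concrete steps you do commit to are moreover incorrect. First, $h$ is increasing in $q=e^{-ps}$ (since $\frac{1+q}{1-q}$ is), hence $D=p\,h+\sqrt{h^2+\frac{1}{1-p^2}}-1$ is \emph{increasing} in $q$, not decreasing as you assert in step (iii). Second, the small-$s$ fallback fails: the correct leading order is $D\sim p\,h+h=\frac{2(1+p)}{(1-p^2)s}=\frac{2}{(1-p)s}$ (you dropped the $p\,h$ contribution), so $\psi\sim 1+(1-p)s$ and $\phi\sim(1+ps)\bigl(1+(1-p)s\bigr)=1+s+O(s^2)$: the first-order term in $s$ is \emph{independent} of $p$, so monotonicity in $p$ only appears at second order and is not ``visible'' from this expansion. (Even your own, miscalculated, expansion $1+s(p+1-p^2)$ is not monotone in $p$ on $]0,1[$.) This confirms that the small-$s$ regime is exactly the delicate one and that the lemma cannot be dispatched by the leading asymptotics; a complete proof must either reproduce the paper's algebraic verification of the sign of $\phi'$ or replace it by a genuinely carried-out estimate, which your proposal does not contain.
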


\begin{proof}
 Le calcul de la dérivée est effectué \emph{via} Maple :
\begin{verbatim}
 h:=p->p/(1-p^2)*(1+exp(-s*p))/(1-exp(-s*p)):
phi:=p->exp(p*s)*(1+2/(p*h(p)+sqrt(h(p)^2+1/(1-p^2))-1)):
resultat := simplify(exp(-p*s)*diff(phi(p),p)):
\end{verbatim}
Le résultat est de la forme $\frac{\hbox{\scriptsize num\'erateur}(p)}{(\mathrm{un\  terme})^2(p^2-1)(e^{-ps}-1)}$ ; il s'agit donc de vérifier que le numérateur est positif. À l'instruction
\begin{verbatim}
 solve(numerateur(p)=0,p);
\end{verbatim}
la réponse est 
\begin{verbatim}
 -RootOf(_Z exp(_Z) + _Z + 2 - 2 exp(_Z))/s
\end{verbatim}
Autrement dit le numérateur s'annule en $p$ si $e^p = \frac{2+p}{2-p}$, équation dont la seule solution est $p=0$ : en effet, s'il y avait une autre solution $p^*$, la dérivée de $e^p \frac{2-p}{2+p}$ s'annulerait entre 0 et $p^*$, or celle-ci est $\frac{-z^2e^{z}}{(2+z)^2}$. Ainsi le numérateur est de signe constant pour $p\in]0,1[$ et $\phi$ est monotone. Les limites de $\phi$ en 0 et 1 sont respectivement $1 + \frac{2}{\sqrt{4s^{-2}+1}-1}$ et $e^{s}$, dont l'égalité est équivalente à $2e^s - 2 - s^2 = 0$, d'unique solution $s=0$ ; vu leurs équivalents pour $s\rightarrow+\infty$ on a donc $\phi(1)>\phi(0)$ pour $s>0$, donc $\phi$ est croissante.

\end{proof}

 \bigskip
 \par\hskip5mm\textbf{\large Remerciements :}\par\sm
 Nous sommes reconnaissant à Jérémy Leborgne
 pour l'élégant argument du lemme \ref{jerem}.

\end{document}